 \newtheorem{theorem}{Theorem}[section]
\newtheorem{proposition}{Proposition}[section]
\newtheorem{lemma}{Lemma}[section]
 \newtheorem{remark}{Remark}[section]
\def\eps{\varepsilon}
\newcommand{\dis}{\displaystyle}
\def\vs{{\vskip .2cm}}
\begin{document}

\title[Crime Modeling ]{Global existence of solutions for a chemotaxis-type system arising in crime modeling}
\author[R. Man\'asevich]{}
\email{manasevi@dim.uchile.cl}
\author[Q. H. Phan]{}
\email{phanqh@math.univ-paris13.fr}
\author[P. Souplet]{}
\email{souplet@math.univ-paris13.fr}
\thanks{All three authors were partially  supported by ECOS-CONICYT Grant $\#$ C08E04.
Also,  RM was partially supported by Basal-CMM-Conicyt, Milenio grant-P05-004F,  and Anillo grant ACT-87, and
QHP was partially supported by Basal-CMM-Conicyt}

\subjclass{}
\keywords{}
\maketitle

\centerline{\small Raul MANASEVICH}
\centerline{\small Centro de
Modelamiento Matem\'atico and Departamento de Ingenier\'{\i}a
Matem\'atica,} \centerline{\small  Universidad de Chile,}
\centerline{\small
Casilla 170, Correo 3, Santiago, Chile}
\bigskip

\centerline{\small Quoc Hung PHAN}
\centerline{\small Universit\'e Paris 13, Sorbonne Paris Cit\'e,}
\centerline{\small Laboratoire Analyse, G\'eom\'etrie et Applications, CNRS, UMR 7539,}
\centerline{\small 93430 Villetaneuse, France}

\bigskip
\centerline{\small Philippe SOUPLET}
\centerline{\small Universit\'e Paris 13, Sorbonne Paris Cit\'e,}
\centerline{\small Laboratoire Analyse, G\'eom\'etrie et Applications, CNRS, UMR 7539,}
\centerline{\small 93430 Villetaneuse, France}
\bigskip

We consider a nonlinear, strongly coupled, parabolic system arising in the modeling of burglary in residential areas.
The system is of chemotaxis-type and involves a logarithmic sensivity function and specific interaction and relaxation terms.
Under suitable assumptions on the data of the problem, we give a rigorous proof of the existence of a global and bounded, classical solution,
thereby solving a problem left open in previous work on this model.
Our proofs are based on the construction of approximate entropies and on the use of various functional inequalities.
We also provide explicit numerical conditions for global existence when the domain in a square,
including concrete cases involving values of the parameters
which are expected to be physically relevant.

\section{Introduction}

\subsection{Model and main results}
In a  series of recent papers, models based on partial differential equations have been derived  to study crime, see \cite{BN}, 
\cite{CCM, Pit10, RB, SBB, SDPTBBC}.
Some of these papers are most related to modeling burglary of houses.
A basic issue here is to obtain patterns that describe  
the location of hotspots, and study phenomena such as  appearance and disappearance of  them, 
their stability and movement. 

\smallskip
A very successful model  was obtained recently by Short et al.  in \cite{SDPTBBC}. They first derived
an  agent-based statistical model to study the dynamics of hotspots, taking two sociological effects into account: 
the `broken window effect' and the `repeat near-repeat effect'. 
The first effect refers to the observation that  crime in an area leads to more crime,
and the second one to the observation  that houses  burglarized at some moment have an increased
probability of being burglarized again for some period of time after that moment.

The agent-based model considered relies
on the assumption that criminal agents are walking randomly on a two-dimensional lattice and are committing 
burglaries when encountering an attractive opportunity. 
An attractiveness value is assigned to every house, which measures 
how easily the house can be burgled
 without consequences for the criminal agent.
In addition to be walking randomly, the criminal agents move toward
areas of high attractiveness values. In turn, when a burglary occurs, 
it increases the attractiveness of the house that was burglarized and of those nearby. 
If no additional burglaries occur, then the local attractiveness 
decays toward a constant value.

In a second step, by taking  a suitable limit of the equations for the discrete model,
the authors in \cite{SDPTBBC} obtained the following system of parabolic differential equations
(in adimensionalized form): 
\begin{equation}
\label{original0}
\left\{ 
\begin{array}{ll}
\dis\frac{\partial A}{\partial t}=\eta\triangle A +N A+A^0-A, \\ \\
\dis\frac{\partial  N}{\partial t}=\nabla \cdot\Bigr[\nabla  N-N\nabla \vartheta(A)\Bigl]- N A
+\overline A-A^0.
\end{array}
\right.
\end{equation}
Here and thoughout this article, we denote
\begin{equation}
\label{deftheta}
\vartheta(A)=\chi\log A,\quad\hbox{with $\chi>0$ constant.}
\end{equation}
The model described in \cite{SDPTBBC} involves the specific value $\chi=2$
(we note that the parameter $\chi$ cannot be scaled out by a linear change of
dependent or independent variables).
The functions $A(x; t)$ and $ N(x,t)$ respectively represent
the attractiveness value and the criminal density at position $x$
and time $t$.
The first equation describes the evolution of the attractiveness 
of individual houses to burglary, and the second describes the burglar movement.
Here $\Omega$ is a bounded domain in ${\mathbb R}^2$,
$\eta>0$ is the diffusion rate of attractiveness,
$A^0$ is the intrinsic  attractiveness,
$\overline A$ is a contant that in the equilibrium case represents the average attractiveness.

A first effort to study the  dynamics of the model represented by problem (\ref{original0}) was done in \cite{RB}, where 
a corresponding  initial-boundary value problem with no flux boundary conditions is considered. 
To deal with this problem, the authors in \cite{RB} assume, for
simplicity, that $\Omega$ is a square and under some symmetry conditions are able to  map this problem into
one with periodic problem boundary conditions. Their main result is local existence of a solution,
but global existence is left open.
In addition, and as a simplification of the  model in \cite{SDPTBBC}, they considered a generalized version 
of a Keller-Segel chemotaxis model with the goal of
understanding possible conditions for global existence vs blow-up of solutions in finite time for the original model. 

In \cite{SDPTBBC}, an important hypothesis is that burglars are generated in the model at a constant rate
and leave the lattice immediately after they  have committed a burglary. In \cite{Pit10}, a modified agent-based 
 model for burglary was obtained,
a new condition is introduced to modify these effects. This new condition,  that is refer as 
burglar fatigue, models the effect that if  burglars are sufficiently deterred, they will 
 eventually get tired and will stop looking for houses
to burglar. This consideration introduces changes to the original equations of Short et al. 
Some additional changes to the original equations 
come in by considering in the model that, if too many burglaries occur at some location, 
then burglars are likely to assume that most of the high-value 
is gone or that the owners of the place have implemented more strict security measures.  

As in the original model, by taking the continuous limit to this new agent-based model, 
a new system  of parabolic equations is obtained in \cite{Pit10}. This system, complemented 
with initial data and no-flux boundary conditions,
leads us  to  study the following  initial-boundary value problem
(again in adimensionalized form):
\vskip .05cm
\begin{equation}
\label{original}
\left\{ 
\begin{array}{ll}
 \dis\frac{\partial A}{\partial t}=\eta\Delta A  +\psi N A(1-A)+\tilde{A} -A,
 & \quad x\in\Omega,\ t>0, \\ \\
\dis\frac{\partial N}{\partial t}= \nabla \cdot\Bigr[\nabla N-N\nabla\vartheta(A)\Bigl]+\omega-\omega N,
 & \quad x\in\Omega,\ t>0, \\ \\
 \dis\frac{\partial A}{\partial \nu}=\frac{\partial N}{\partial \nu}=0,
 & \quad x\in\partial\Omega,\ t>0, \\ \\
A(x,0)=A_0(x), \quad N(x,0)=N_0(x),
 & \quad x\in\Omega.
\end{array}
\right.
\end{equation}
Here  again the function $\vartheta$ is defined by (\ref{deftheta}) with $\chi=2$  in  \cite{Pit10}. Moreover,
it is assumed -- as we will do throughout this paper -- that
\begin{equation}\label{GenHyp1}
\hbox{ $\Omega$ is a sufficiently smooth bounded domain of ${\mathbb R}^2$, or a square,}
\end{equation}
\begin{equation}\label{GenHyp2}
\hbox{ $\eta,\psi,\omega,\tilde{A}>0$ are constants}
\end{equation}
and that the initial data satisfy 
\begin{equation}\label{GenHyp3}
\hbox{ $(A_0,N_0)\in H^{1+\beta}(\Omega)\times L^2(\Omega)$
for some $\beta>0$, with $A_0>0$ in~$\overline\Omega$ and $N_0\ge 0$ a.e. in $\Omega$.}
\end{equation}
The outward normal vector on $\partial\Omega$ is denoted by $\nu$.
(We point out that all the functional analytic properties needed for the proofs
remain true in a square, although the latter is not smooth at the corner points.)
Problem (\ref{original}) is locally well posed, the positivity of $A$ being of course understood as part
of the definition of solution (see~Section~2 for details).
\vs

In \cite{Pit10}, some results about linearized stability/instability of the homogeneous steady-states 
and some numerical simulations suggesting the existence of hotspots were given for system~(\ref{original}).
However, the (local and) global existence of solutions was left open.

In this paper, our main goal is to give a rigorous proof of the global existence and boundedness of solutions of system (\ref{original})
under suitable assumptions. In what follows, we denote
$$
\eps_0=\eps_0(\Omega)=\mu^{-2}K^{-1/2},
$$ 
where $\mu$ is the best constant in the Poincar\'e-Sobolev inequality (\ref{PoincSob})
and $K$ is the best constant in the interpolation estimate (\ref{EllEstim}).
Also, recalling that $A_0\in C(\overline\Omega)$ by Sobolev imbedding, we set
$$A_{\max}=\max\{1, \tilde{A},\displaystyle \max_{\overline\Omega} A_0\}
\ge A_{\min}=\min\{1, \tilde{A}, \displaystyle\min_{\overline\Omega} A_0\}>0.$$ 

Our main result is the following:

\begin{theorem}\label{global}
Assume (\ref{deftheta}), (\ref{GenHyp1})-(\ref{GenHyp3}) and
\begin{equation}
\label{small}
 \Bigl(\frac{A_{\max}}{A_{\min}}\Bigr)^2(A_{\max}-A_{\min})\max\{\|N_0\|_1, |\Omega|\}< 
 \eps_0{\hskip 1pt}\eta {\hskip 1pt} \psi^{-1}\chi^{-2}.
\end{equation}
Then the solution of problem (\ref{original}) is global and satisfies the uniform bounds
\begin{equation}
\label{boundAinfty}
\sup_{t\geq 0} \|A(t)\|_{\infty}<\infty
\end{equation}
and
\begin{equation}
\label{boundNinfty}
\sup_{t\geq \tau} \|N(t)\|_{\infty}<\infty,\quad\hbox{ for all } \tau>0.
\end{equation}
\end{theorem}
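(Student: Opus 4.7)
The plan is to combine a priori maximum-principle bounds with an approximate Lyapunov functional whose bad terms are absorbed thanks to condition~(\ref{small}). Local well-posedness, which preserves $A>0$ and $N\ge 0$, is deferred to Section~2; here the goal is the a priori estimates preventing finite-time blow-up.

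The easy bounds come first. Since $N\ge 0$, at a spatial maximum of $A$ with $A\ge\max(1,\tilde A)$ one has $\Delta A\le 0$ and $\psi NA(1-A)+\tilde A-A\le 0$, so the parabolic maximum principle gives $A_{\min}\le A(x,t)\le A_{\max}$, proving~(\ref{boundAinfty}) immediately. Integrating the $N$-equation against $1$ and using the no-flux boundary condition yields $\tfrac{d}{dt}\|N\|_1=\omega(|\Omega|-\|N\|_1)$, hence $\|N(t)\|_1\le M:=\max\{\|N_0\|_1,|\Omega|\}$ uniformly in time.

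The heart of the argument is an approximate entropy inequality. Differentiating $\int N\log N$ produces the dissipation $\int|\nabla N|^2/N$, the benign relaxation $\omega\int\log N\,(1-N)\le 0$, and the chemotactic cross-term $\chi\int\nabla N\cdot\nabla A/A$; Young's inequality gives
$$\chi\int\frac{\nabla N\cdot\nabla A}{A}\le\tfrac{1}{2}\int\frac{|\nabla N|^2}{N}+\frac{\chi^2}{2A_{\min}^2}\int N|\nabla A|^2,$$
reducing the matter to $\int N|\nabla A|^2$. By H\"older, $\int N|\nabla A|^2\le\|N\|_2\|\nabla A\|_4^2$; the Poincar\'e--Sobolev inequality applied to $\sqrt N$ bounds $\|N\|_2$ by $\mu^2\int|\nabla\sqrt N|^2$ plus lower-order terms (using $\|N\|_1\le M$), while the elliptic interpolation applied to $A-c$ (with any $c\in[A_{\min},A_{\max}]$, so $\|A-c\|_\infty\le A_{\max}-A_{\min}$) bounds $\|\nabla A\|_4^2$ by $K\|\Delta A\|_2(A_{\max}-A_{\min})$. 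Coupling this with the identity obtained by testing the $A$-equation against $-\Delta A$, which produces the dissipation $\eta\int|\Delta A|^2$ against a forcing of order $\psi A_{\max}(A_{\max}-A_{\min})\|N\|_2\|\Delta A\|_2$, the combined functional
$$\Phi(t)=\int N\log N+\lambda\int|\nabla A|^2$$
(for a suitable $\lambda>0$) satisfies $\Phi'(t)+(\text{dissipation})\le [\text{smallness factor}]\cdot(\text{dissipation})+C$. Condition~(\ref{small}) is precisely calibrated so that this factor---essentially $\mu^2K^{1/2}\chi^2\psi\eta^{-1}(A_{\max}/A_{\min})^2(A_{\max}-A_{\min})M$---is strictly less than one, allowing us to absorb the bad terms and to conclude $\sup_{t\ge 0}\Phi(t)<\infty$.

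Once $\int N\log N$ and $\int|\nabla A|^2$ are uniformly bounded, a standard parabolic bootstrap---using smoothing of the Neumann heat semigroup for the $N$-equation, viewing the drift $\chi\nabla A/A$ as an $L^2$ vector field---upgrades $N$ to $L^p$ for every $p<\infty$ and then to $L^\infty$ on $[\tau,\infty)$ for each $\tau>0$, giving~(\ref{boundNinfty}) and global existence. The main obstacle is the third step: aligning the exponents so that the prefactor on the bad side of the approximate-entropy inequality is exactly $\eps_0^{-1}=\mu^2K^{1/2}$; any cruder H\"older splitting or use of the functional inequalities would force a stricter smallness hypothesis than~(\ref{small}). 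A subsidiary difficulty is that the bad term $\|N\|_2\|\Delta A\|_2$ couples the two dissipations, so one must absorb it into $\int|\nabla N|^2/N+\eta\int|\Delta A|^2$ via a weighted Cauchy--Schwarz with the weight $\lambda$ optimally tuned.
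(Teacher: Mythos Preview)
Your approximate-entropy argument is correct and matches the paper's: the functional $\Phi=\sigma\int(N\log N-N+1)+\tfrac12\|\nabla A\|_2^2$, the Poincar\'e--Sobolev control of $\|N\|_2^2$ by $\mu^2 M\int|\nabla N|^2/N$, the interpolation $\|\nabla A\|_4^4\le K\,\mathrm{osc}^2(A)\|\Delta A\|_2^2$, and the optimization of the Young weights so that the absorption threshold is exactly $\mu^{2}K^{1/2}\chi^2\psi\eta^{-1}(A_{\max}/A_{\min})^2(A_{\max}-A_{\min})M<1$, i.e.\ condition~(\ref{small}).

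There is, however, a genuine gap in your bootstrap step. Knowing only that $\int N\log N$ and $\|\nabla A\|_2$ are uniformly bounded---so that the drift $\chi A^{-1}\nabla A$ lies in $L^\infty_t L^2_x$---is \emph{not} enough to upgrade $N$ via the heat semigroup: with $N$ merely in $L\log L$ and the drift in $L^2$, the flux $N\nabla\vartheta(A)$ is not a~priori in any $L^q$ with $q>1$, and an $L^2$ drift in two dimensions is exactly critical. What the paper uses instead is the \emph{time-integrated dissipation} $\int_s^t\|\Delta A\|_2^2\,d\tau\le C(1+t-s)$, which your entropy inequality does yield but which you never invoke. With this in hand, one tests the $N$-equation by $N$, integrates the cross term by parts to $-\tfrac12\int N^2\Delta\vartheta(A)\le\tfrac12\|N\|_4^2\|\Delta\vartheta(A)\|_2$, estimates $\|N\|_4^2$ by Gagliardo--Nirenberg, and closes a Gronwall inequality whose time-dependent coefficient $\|\Delta\vartheta(A)\|_2^2$ has linearly bounded time integral; this gives $\sup_t\|N\|_2<\infty$. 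Only then does the semigroup argument work: $\|N\|_2$ bounded feeds the $A$-equation to give $\|A\|_{H^m}$ for all $m<2$, hence $\nabla A\in L^p$ for every $p<\infty$, and the Duhamel iteration on $N$ finishes. So your outline needs one additional energy step between the entropy bound and the semigroup bootstrap.
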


On the other hand, when $\chi\le 1$, by using an argument from~\cite{Bi2}, we can show that the 
conclusion of Theorem~\ref{global} remains
true under a simple assumption on $A_0$ and $\tilde A$ and without any 
size restriction on $N_0$.
However, this does not apply to the criminological model in~\cite{Pit10} where $\chi=2$.

\begin{theorem}\label{global3}
Assume (\ref{deftheta}), (\ref{GenHyp1})-(\ref{GenHyp3}), with
$0<\chi\le 1$, $ \tilde{A}\le 1$ and $\displaystyle \max_{\overline\Omega} A_0\le 1$. 
Then the solution of problem~(\ref{original}) is global and satisfies the uniform bounds
(\ref{boundAinfty})-(\ref{boundNinfty}).
\end{theorem}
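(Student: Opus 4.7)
\emph{Step 1 (pointwise $A$-bounds) and Step 2 ($L^1$ bound on $N$).} The overall plan is to first establish two-sided pointwise bounds on $A$ (yielding (\ref{boundAinfty}) directly), then exploit $\chi\le 1$ to prove a uniform $L^p$ bound on $N$ for some $p>1$, and finally bootstrap to $L^\infty$. Under $\tilde A\le 1$ and $\max_{\overline\Omega} A_0\le 1$, the constant $\bar A\equiv 1$ is a supersolution of the $A$-equation, since $\psi N\bar A(1-\bar A)=0$ and $\tilde A-\bar A\le 0$; parabolic comparison gives $A\le 1$. For the lower bound, as long as $0\le A\le 1$ and $N\ge 0$ we have $\psi NA(1-A)\ge 0$, so $A$ is a supersolution of $u_t=\eta\Delta u-u+\tilde A$, and inspection at a minimum point yields $A\ge\min(\min A_0,\tilde A)=A_{\min}>0$. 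Hence $A_{\min}\le A\le 1$ uniformly, which is (\ref{boundAinfty}). Integrating the $N$-equation and using the no-flux condition gives the ODE $(\|N\|_1)'=\omega(|\Omega|-\|N\|_1)$, so $\|N(t)\|_1\le\max(\|N_0\|_1,|\Omega|)$ for all $t$.

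\emph{Step 3 (uniform $L^p$ bound on $N$, the heart of the proof).} The chemotactic flux factors as $\nabla N-\chi N\nabla\log A=A^\chi\nabla U$ with $U:=NA^{-\chi}$, so the $N$-equation can be written $(A^\chi U)_t=\nabla\cdot(A^\chi\nabla U)+\omega(1-A^\chi U)$. Testing this identity against $U^{p-1}$ for some $p>1$, integrating by parts, and using the $A$-equation to eliminate $\Delta A$, one is led after reorganising to an inequality whose left-hand side is the time derivative $\tfrac{1}{p}\tfrac{d}{dt}\!\int\! A^\chi U^p$ together with two dissipative quantities $(p-1)\!\int\! A^\chi U^{p-2}|\nabla U|^2$ and $\tfrac{(1-\chi)\chi(p-1)\eta}{p}\!\int\! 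A^{\chi-2}U^p|\nabla A|^2$ and further non-negative absorptions (most importantly $\tfrac{\chi\psi(p-1)}{p}\!\int\! A^{2\chi}U^{p+1}(1-A)\ge 0$, which uses $A\le 1$); the right-hand side consists of the cross term $\chi(p-1)\eta\!\int\! A^{\chi-1}U^{p-1}\nabla U\cdot\nabla A$ and a lower-order remainder bounded by $C(1+\!\int\! A^\chi U^p)$. The crucial structural input is $1-\chi\ge 0$, which renders the second dissipation non-negative. Young's inequality with a well-chosen parameter then absorbs the cross term into the two dissipations (and, in the borderline case $\chi=1$, also into the extra $(1-A)$-absorption), and Gronwall yields $\sup_{t\ge 0}\|N(t)\|_p<\infty$ for some $p>1$.

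\emph{Step 4 (bootstrap) and main obstacle.} With $N\in L^\infty_t L^p_x$, standard parabolic regularity for the $A$-equation on a two-dimensional domain gives $A\in L^\infty_t W^{1,q}_x$ with $q>2$ (by Sobolev embedding), hence $\nabla\log A\in L^\infty_t L^q_x$. Viewing the $N$-equation as a linear drift-diffusion with this drift and running a Moser-type iteration yields $\sup_{t\ge\tau}\|N(t)\|_\infty<\infty$ for every $\tau>0$, i.e.\ (\ref{boundNinfty}); Schauder theory then upgrades this to a global classical solution. The main obstacle is Step~3: the condition $\chi\le 1$ makes the key dissipation only just non-negative, and the cross-term balance via Young's inequality is correspondingly tight. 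The limiting case $\chi=1$ in particular demands the additional use of the $(1-A)$-absorption term and of the pointwise lower bound $A\ge A_{\min}$, which is precisely the technical device borrowed from~\cite{Bi2}.
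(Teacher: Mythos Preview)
Steps 1, 2 and 4 are fine, but Step 3 contains a genuine gap. After your substitution $U=NA^{-\chi}$ and testing with $U^{p-1}$, the identity you obtain has, on the left, the dissipations
\[
(p-1)\int A^\chi U^{p-2}|\nabla U|^2
\quad\text{and}\quad
\frac{\chi(1-\chi)(p-1)\eta}{p}\int A^{\chi-2}U^{p}|\nabla A|^2,
\]
and on the right the cross term $\chi(p-1)\eta\int A^{\chi-1}U^{p-1}\nabla U\cdot\nabla A$. Young's inequality can absorb the cross term into the two dissipations only when the product of their coefficients dominates the square of half the cross coefficient, i.e.\ when $p\le 4(1-\chi)/(\chi\eta)$. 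Thus you can pick some $p>1$ only if $\chi<4/(4+\eta)$; for any $\eta>0$ this excludes a full neighbourhood of $\chi=1$, and at $\chi=1$ the second dissipation vanishes altogether. Your fallback -- using the nonnegative term $\frac{\chi\psi(p-1)}{p}\int A^{2\chi}U^{p+1}(1-A)$ to close the estimate at $\chi=1$ -- does not work as stated: that term contains no $|\nabla A|^2$ factor, so it cannot compensate the missing gradient dissipation, and integrating the cross term by parts to eliminate $\nabla A$ simply reproduces the $\int A^{\chi-1}U^{p}A_t$ contribution you already used, making the argument circular. In short, the $L^p$ energy route you sketch does not cover the full range $0<\chi\le 1$ claimed in the theorem.

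The device actually taken from~\cite{Bi2} (and used in the paper) is different: one works with the \emph{logarithmic} functional $Y(t)=\int_\Omega N(\log N-c\log A)\,dx$. Differentiating and integrating by parts yields a quadratic form in $(\nabla N,\nabla A)$ whose sign is governed by the discriminant $(\chi+c(1+\eta))^2-4c(\chi+\eta)$; one can choose $c>0$ making this nonpositive precisely when $\chi\le 1$, with no restriction on $\eta$ and no appeal to the $(1-A)$ term except through the harmless lower-order bound $F\ge -A$. This gives $\sup_t\int N|\log N|<\infty$ rather than an $L^p$ bound. The paper then feeds this into an improved Poincar\'e-type inequality (replacing the factor $\mu^2\|N\|_1$ in Lemma~\ref{leminequa} by an arbitrarily small $\eps\int N|\log N|$), which in turn makes the approximate-entropy differential inequality of Lemma~\ref{lembound} close without any smallness condition; the rest is the same bootstrap you describe in Step~4.
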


\smallskip

\subsection{Explicit global existence conditions and discussion}

Interestingly, in view of practical applications, the constant appearing in 
Theorem~\ref{global} can be estimated explicitly in the case of a square.

\begin{theorem}\label{global2}
Let $\Omega=(0,L)^2$, with $L>0$. Then the result of Theorem~\ref{global} is true with 
$$\eps_0=\frac{1}{3\sqrt{3}}\approx 0.19.$$
\end{theorem}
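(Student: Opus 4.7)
The plan is to exhibit explicit upper bounds for the constants $\mu$ and $K$ of (\ref{PoincSob}) and (\ref{EllEstim}) on the square $\Omega=(0,L)^2$, so that $\mu^2K^{1/2}\le 3\sqrt{3}$, equivalently $\eps_0(\Omega)=\mu^{-2}K^{-1/2}\ge 1/(3\sqrt{3})$.

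The first step is a scaling reduction. Under the dilation $x\mapsto Lx$, each of the norms appearing in (\ref{PoincSob}) and (\ref{EllEstim}) transforms by a definite power of $L$, so the constants $\mu(L)$ and $K(L)$ are themselves homogeneous of definite degrees in $L$. Since the target value $1/(3\sqrt{3})$ is $L$-independent, these exponents must cancel in the product $\mu^{-2}K^{-1/2}$. I would verify this cancellation first, which reduces the task to computing the constants on a single reference square, say $(0,1)^2$ or $(0,\pi)^2$.

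On the reference square I would work in the Neumann cosine basis $\{\cos(n\pi x_1)\cos(m\pi x_2)\}_{n,m\ge 0}$: any zero-mean $u\in H^1(\Omega)$ has an explicit Fourier-cosine expansion, and both the $L^p$ norms appearing in (\ref{PoincSob}) and the higher-regularity quantities appearing in (\ref{EllEstim}) reduce to concrete bounds on double sums, treatable by Parseval, Cauchy-Schwarz, and Hausdorff-Young. Equivalently, even reflection across each side extends a Neumann function to a periodic function on the doubled torus $(-L,L)^2$, with preservation of the relevant norms; this moves the estimates into the setting of Fourier analysis on $\mathbb{T}^2$, where sharp Sobolev-type constants are classical. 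Either route makes the constants $\mu$ and $K$ computable.

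The main obstacle will be avoiding slack when chaining the two inequalities: the factor $3\sqrt{3}=3^{3/2}$ is characteristic of a Gagliardo-Nirenberg interpolation whose extremizers are fixed by a one-parameter optimization (typically of the form $s\mapsto s^a(1-s)^b$ or $t\mapsto \alpha t+\beta t^{-2}$), and I would match the interpolation exponent in the reduction from (\ref{EllEstim}) so that this optimization is saturated, rather than used via a generous Young inequality. The final elementary computation then produces $\mu^2 K^{1/2}\le 3\sqrt{3}$, which is exactly the explicit form of Theorem \ref{global} claimed in Theorem \ref{global2}.
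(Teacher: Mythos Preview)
Your overall strategy---bound $\mu$ and $K$ separately on the square and then multiply---is exactly the paper's strategy, and your scaling remark is correct (both constants are dimensionless in two dimensions). But the execution plan has real gaps.

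For the Poincar\'e--Sobolev constant $\mu$ in (\ref{PoincSob}) the Fourier route you propose does not work as stated. Parseval handles $\|u-\bar u\|_2$, but the right-hand side is $\|\nabla u\|_1$, and there is no useful Parseval or Hausdorff--Young control of an $L^1$ norm from below by Fourier data; Hausdorff--Young goes the wrong direction (it bounds $\|\hat u\|_q$ by $\|u\|_p$ for $p\le 2$, not $\|u\|_1$ from below). The paper instead argues in physical space: writing $u(x,y)$ via one-dimensional mean values and fundamental-theorem increments, one obtains
\[
\|u\|_2^2\le \tfrac{1}{L}\|u\|_1(\|u_x\|_1+\|u_y\|_1)+\|u_x\|_1\|u_y\|_1,
\]
and then invokes the sharp $L^1$ Poincar\'e inequality on convex domains (Acosta--Dur\'an) to replace $\|u\|_1$ by $(L/\sqrt{2})\|\nabla u\|_1$, yielding $\mu\le\sqrt{3/2}$.

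For the interpolation constant $K$ in (\ref{EllEstim}) your cosine-basis idea \emph{is} essentially what the paper uses, but only for one ingredient: the identity $\|u_{xx}\|_2^2+\|u_{yy}\|_2^2+2\|u_{xy}\|_2^2=\|\Delta u\|_2^2$ on the square with Neumann conditions. The other ingredient is a pointwise algebraic bound on $\bigl|2D^2u+(\Delta u)I\bigr|_1^2$ coming from the specific form of inequality (\ref{nabla4b}); together these give $K\le 12$.

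Finally, your interpretation of the target value is off. The number $3\sqrt{3}$ is not produced by a Gagliardo--Nirenberg optimization or any ``saturated'' one-parameter extremization; it is simply the product $\mu^2K^{1/2}\le (3/2)\cdot\sqrt{12}=3\sqrt{3}$ of the two separate bounds above. Looking for an interpolation extremizer would send you in the wrong direction.
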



With Theorem~\ref{global2} at hand, we shall now explicitly compute our sufficient condition
for global existence and boundedness in concrete cases involving values of the parameters
which are expected to be physically relevant (cf.~\cite{SDPTBBC, Pit10}).
The parameters of system~(\ref{original}) are expressed in~\cite{Pit10} in terms of parameters of the dimensional form of the model.
To avoid confusion, dimensional parameters are marked with a hat in what follows.

The domain is taken to be a square lattice where exactly one house is located at each lattice site.
Distances are thus measured in units of house separations and the adimensional side length of the 
domain is thus essentially equal to $\sqrt{p}$, where $p\gg 1$ is the total number of houses.
After a suitable renormalization (involving the diffusion and mean lifetime parameters of the burglars
-- see table~1 and formula~(3.3) in \cite[p.406]{Pit10}),
the measure of the adimensionalized spatial domain $\Omega$ is given by $|\Omega|=p/3500$. 
We chose $p=3500$, hence $|\Omega|=1$.
Next, the parameter $\omega$ is given by $\omega=\hat{\omega}_2/\hat{\omega}_1$, where 
$\hat{\omega}_1$ and $\hat{\omega}_2$ are the mean lifetimes of the attractiveness and of the active burglars, respectively.
The values chosen in \cite{Pit10} are $\hat{\omega}_1=(1/14)\, {\rm day}^{-1}$ and $\hat{\omega}_2=6\,{\rm day}^{-1}$, hence $\omega=84$.
The parameter $\psi$ is given by $\psi=\hat{\theta}\hat{\Gamma}/\hat{\omega}_1\hat{\omega}_2$, where 
$\hat{\Gamma}$, describing the source term, stands for the number of burglars becoming active 
per time and surface unit and $\hat{\theta}$ is the factor that boosts attractiveness due to a burglary.
We take $\hat{\Gamma}=0.002\ {\rm burglars}\cdot{\rm day}^{-1}\cdot{\rm house\ separation}^{-2}$, 
which is the value taken in~\cite{SDPTBBC} for the analogous parameter (the value $\hat{\Gamma}=0.5$ used in~\cite{Pit10} seems too high,
apparently due to a lack of spatial normalization).
For the domain that we consider, this means a total source term of $7\ {\rm burglars}\cdot{\rm day}^{-1}$.
Concerning $\hat{\theta}$, following~\cite{Pit10}, we take $\hat{\theta}=1\ {\rm burglars}^{-1}\cdot{\rm day}^{-1}$, which leads to 
$\psi=(14/3)\times 10^{-3}\approx 0.0047$.
As for the attractiveness diffusion parameter $\eta$, which models neighboring effects, smaller values mean that 
repeat victimization is more and more likely than near-repeat victimization.
The values taken in~\cite{SDPTBBC} range from $0.01$ to $0.2$, whereas~\cite{Pit10} takes $\eta=0.001$.

Last, let us consider the initial data. 
Concerning the (adimensional) attractiveness $A$, recall 
that the constant $\tilde A$ is assumed to represent its static component,
while $A-\tilde A$ represents the dynamic component associated with the boost hypothesis.
However it is not made completely clear in~\cite{SDPTBBC, Pit10} how $A$ should be evaluated or measured.
Here we shall assume that
\begin{equation}\label{condAtilde}
\tilde A\leq A_0(x)\leq 1,
\end{equation}
which implies that $A_{\min}=\tilde A$ and $A_{\max}=1$.
Consequently (cf.~Lemma~\ref{priori} below), the attractiveness remains for all times larger or equal 
to $\tilde A$, and at most one.
Note that in the numerical simulations from \cite{Pit10}, the initial data
are assumed to be close to the homogeneous steady-state:
$$(A^*,N^*)=\biggl(\frac{\psi-1+\sqrt{(\psi-1)^2+4\psi \tilde A}}{2\psi}, 1\biggr),$$
which in particular satisfies~(\ref{condAtilde}).
As for the initial burglar density $N_0$, we assume for simplicity that its average value $\overline{N_0}$ satisfies
$$ 
\overline{N_0}:=|\Omega|^{-1}\|N_0\|_1\le 1=N^*.
$$  
In dimensional units (cf.~\cite{Pit10}), in view of the above choice of $\hat{\omega}_2$ and $\hat{\Gamma}$, 
this corresponds to an initial total population less than $\approx 1.2$ burglars
(although this may seem small, recall that we have at the same time a total source term of $7\ {\rm burglars}\cdot{\rm day}^{-1}$).

Now, for the above-chosen numerical values of the parameters, the 
global existence and boundedness condition~(\ref{small}) in Theorems~\ref{global} and \ref{global2} becomes
\begin{equation}
\tilde A^{-2}(1-\tilde A) < \gamma\,\eta,\quad\hbox{ where } \gamma
=\frac{1}{12\sqrt 3}{\hskip 1pt} (|\Omega|\psi)^{-1}
=\frac{125\sqrt 3}{21}  \approx 10.31.
\end{equation}
This is equivalent to 
$$\tilde A>\tilde A_-:=\frac{2}{1+\sqrt{1+4\gamma\,\eta}}.$$
The values of $\tilde A_-$ as a function of $\eta$ are given in the following table:
\vspace{5mm}
$$
\vspace{5mm}
\begin{tabular}{|c|c|} 
  \hline
 $\eta$ & $\tilde A_-$ \\
  \hline
  \ \ 0.01\ \   & \ \ 0.91\ \   \\
  \ \ 0.05\ \  & \ \ 0.73\ \   \\
  \ \ 0.1\ \  & \ \ 0.61\ \   \\
  \ \ 0.2\ \  & \ \ 0.49\ \   \\
 \hline
\end{tabular}
$$

Therefore, our global existence conditions are compatible with magnitudes of attractivity which 
are up to about twice those of their static component in the range of $\eta$ used by \cite{SDPTBBC}.
We note that it is suggested in~\cite{Pit10} that a ratio 
of order $10$ (instead of $2$) might be desirable, even for smaller values of $\eta$
that the ones we consider here. However, this is still beyond
the range in which we can rigorously prove global existence
(and actually, global existence for such systems need not be taken for granted -- see next subsection).


\subsection{Related results and comments}

As noted in \cite{SDPTBBC}, problems (\ref{original0}) and (\ref{original}) belong to the family of chemotaxis-type systems. 
A more general class of such systems takes the form
\begin{align}\label{general}
 \begin{cases}
\dis\frac{\partial A}{\partial t}=\eta\Delta A -A +  f(A,N),\\ \\
\dis\frac{\partial N}{\partial t}=\Delta N -\nabla (N\nabla h(A))-\omega N+g(A,N).
 \end{cases}
\end{align}
The typical feature of chemotaxis-type systems is the presence of the antidiffusion term, here 
$-\nabla (N\nabla  h(A))$,
by which the population, of density $N$, tends to move towards higher concentrations of $A$.
The best-known among this class of problems is the Keller-Segel model \cite{KS}, corresponding to $f=N$, $h=A$ and $g=0$.
There is a very large literature on such systems (see \cite{HP, Ho}, for recent surveys).

System (\ref{general}) with a logarithmic sensivity function $h=\chi\log(A)$ 
was considered in~\cite{Bi, SW} for $f=N$ and $g=\omega=0$.
Without any size restriction on the initial data, existence of a global classical solution
was proved in~\cite{Bi} under the restriction $\chi\le 1$,
whereas existence of a (possibly singular) global weak solution was proved in~ \cite{SW} for any $\chi>0$.
System (\ref{general}) with a logarithmic sensivity function was also considered in \cite{AOTYM} (see also \cite{OTYM}),
in conjunction with a quadratic absorption term, namely $g=-cN^2$, and $f=N$.
In this case, global classical existence was proved for any $\chi>0$ without size restriction on the initial data,
but the proof made crucial use of the quadratic absorption term.

Here, with only a linear absorption $-\omega N$ and the specific $f$ under consideration,
a size condition on the initial data is required to prove existence of global classical solution,
but we do not need absolute restrictions on the parameter $\chi$. Although we do not know
what happens for larger initial data, let us recall that in the case of the Keller-Segel model
in two space dimensions, global existence is true only for small initial mass and that blow-up may
occur for large mass. However proving blow-up for $2$-dimensional parabolic chemotaxis models is a very challenging task. 
The only known blow-up result for the $2d$ Keller-Segel system is that of Herrero and Vel\'azquez~\cite{HVpisa},
and its proof, based on matched asymptotics, is highly technical.
In simplified parabolic-elliptic chemotaxis models, where the term $\partial A/\partial t$ in~(\ref{general}) is replaced by $0$,
blow-up proofs are easier and there are more results available. In particular,
when $h=\chi\log A$, $f=N$, $g=\omega=0$ and the space dimension $n$ is at least $3$,
finite time blow-up is known \cite{NS} to occur for $\chi>2n/(n-2)$. For blow-up results concerning the 
parabolic-elliptic Keller-Segel system, see e.g. the survey~\cite{Bl}.

Our global existence proofs, as is customary in chemotaxis-type problems, rely on a priori estimates obtained by energy 
and entropy arguments. 
However, in the case of Theorem~\ref{global}, some care is needed to properly 
take into account the size condition on the initial data.
In the case of the Keller-Segel system, an exact entropy functional was found and used to prove global existence
for suitably small mass in~\cite{NSY97, GZ, Bi}.
Although system (\ref{original}) is not known to possess an exact entropy functional,
{\it assuming condition (\ref{small}) on the data of the problem,}
we can nevertheless construct an {\it approximate} entropy functional (with the typical 
$N\log N$ growth -- cf. Lemma~\ref{lembound}), 
which is the key to our a priori estimates. 
We also stress that, in spite of uniform lower and upper bounds for $A$
which easily follow from the maximum principle, the proof of the estimates of $N$ is far from being immediate.
As for Theorem~\ref{global3}, under the assumptions $\chi\le 1$, $ \tilde{A}\le 1$ and 
$\displaystyle \max_{\overline\Omega} A_0\le 1$, we can use a modified entropy functional from~\cite{Bi2}.

\begin{remark}
{\rm By straightforward modifications of the method (see Remark~\ref{remrem} for more details), one can show that 
a result similar to Theorem~\ref{global}
remains true for the initial-boundary value problem associated with the more general 
system (\ref{general}) 
{\rm where $f, g, h$ are (sufficiently smooth) functions satisfying  the following conditions}
\begin{itemize}
 \item $({\mathcal H}_1)$ \quad  $g(A,0)\geq 0, \quad \forall A\geq 0.$\\
\item $({\mathcal H}_2)$ There exist two positive constants  $A_{\min}< A_{\max}$ such that
\begin{align*}
 -A_{\min}+f(A_{\min}, N)\geq 0, \quad \forall N\geq 0,\\
-A_{\max}+f(A_{\max}, N)\leq 0, \quad \forall N\geq 0.
\end{align*}
\item $({\mathcal H}_3)$ 
$$|g(A,N)|\leq g_1(A)N^{1-\delta}+g_2(A),$$
\qquad where $\delta\in (0,1)$ and $g_1, g_2$ are bounded in $[A_{ \min}, A_{\max}]$.
\item $({\mathcal H}_4)$ 
$$|f(A,N)|\leq f_1(A) N+ f_2(A),$$
\qquad where  $f_1, f_2$ are bounded in $[A_{ \min}, A_{\max}]$.\\
\item $({\mathcal H}_5)$
$$\sup_{[A_{\min}, A_{\max}]}\bigg|\frac{d^ih(A)}{dA^i}\bigg|<\infty, \quad i=1,2.$$
\end{itemize}
}
\end{remark}


The outline of the rest of the article is as follows. 
Subsection 2.1 is devoted to local existence and uniqueness,
whereas Subsection 2.2 contrains basic estimates, energy identities and functional inequalities
which will needed for proving global existence.
Sections 3--5 are then devoted to the proofs of Theorems~1.1, 1.3 and 1.2, respectively.
Finally, our main conclusions are summarized in Section~6.

\section{Preliminaries}

\subsection{Local existence and uniqueness}

We have the following local existence and uniqueness result.

\begin{proposition}
\label{proplocexistP}
Assume (\ref{deftheta}), (\ref{GenHyp1})-(\ref{GenHyp3}) and
 fix any $\beta\in(0,\frac12)$.
Then problem (\ref{original}) admits a unique, classical,
maximal in time solution $(A, N)$ with $A>0$.
Moreover, if its maximal existence time $T^*$ is finite,
then $\lim_{t\to T^*}\|A(t)\|_{H^{1+\beta}}+\|N(t)\|_{L^2}=\infty$.
\end{proposition}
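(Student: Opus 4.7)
The plan is to use a Banach fixed-point argument. Set $m_0:=\min_{\overline\Omega}A_0>0$ and, for $R>\|A_0\|_{H^{1+\beta}}+\|N_0\|_{L^2}+1$ and $T>0$ to be chosen small, introduce the closed metric subspace
$$
X_T := \bigl\{(A,N)\in C([0,T];H^{1+\beta}(\Omega))\times C([0,T];L^2(\Omega)) : (A,N)|_{t=0}=(A_0,N_0),\ A\ge m_0/2,\ \|A\|_{C_tH^{1+\beta}}+\|N\|_{C_tL^2}\le R\bigr\},
$$
equipped with the natural metric, where subscripts $C_t$, $L^2_t$ denote norms taken in time over $[0,T]$.

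For $(\bar A,\bar N)\in X_T$, I would define $\Phi(\bar A,\bar N)=(A,N)$ by solving two decoupled problems with Neumann boundary conditions. First, $A$ solves the semilinear equation $A_t=\eta\Delta A-A+\psi\bar N A(1-A)+\tilde A$ with datum $A_0$; this is standard semilinear parabolic theory since $\bar N\in C_tL^2$ and $A\mapsto A(1-A)$ is locally Lipschitz on bounded sets. Second, $N$ solves the linear equation $N_t=\Delta N-\nabla\cdot(\chi N\nabla\bar A/\bar A)-\omega N+\omega$ with datum $N_0$; since $\bar A\ge m_0/2$ and $\nabla\bar A\in C_tH^\beta\hookrightarrow C_tL^q$ for some $q>2$ by Sobolev embedding in dimension two, the drift coefficient lies in $C_tL^q$ and standard Lions-type variational theory yields a unique weak solution $N\in C_tL^2\cap L^2_tH^1$. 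The weak maximum principle propagates $A\ge m_0/2$ (shrinking $T$ if needed, using $\tilde A>0$ and short-time continuity) and $N\ge 0$.

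Next I would show $\Phi$ maps $X_T$ into itself and is a contraction on $X_T$ for $T$ small enough. The self-mapping property follows from maximal parabolic regularity bounds on $A$ in $C_tH^{1+\beta}$ and from $L^2$-energy estimates on $N$, both of which produce a factor that is $o(1)$ as $T\to 0$ compared to the data contribution. For the contraction, subtracting two outputs $(A_i,N_i)=\Phi(\bar A_i,\bar N_i)$ yields linear equations for the differences whose source terms are Lipschitz in $(\bar A_1-\bar A_2,\bar N_1-\bar N_2)$ on the bounded set $X_T$, using the lower bound on $\bar A_i$ to control the logarithmic nonlinearity, with Lipschitz constant tending to $0$ as $T\to 0$. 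Uniqueness of the local classical solution follows from the contraction; by concatenation one obtains a unique maximal solution, and elliptic bootstrap applied to the equations gives the classical regularity.

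For the blow-up alternative, suppose $T^*<\infty$ but $\liminf_{t\to T^*}(\|A(t)\|_{H^{1+\beta}}+\|N(t)\|_{L^2})<\infty$. The weak maximum principle (cf.\ the forthcoming Lemma~\ref{priori}) gives $A(t)\ge A_{\min}>0$ uniformly in $t$, so restarting the local existence theorem from times $t_n\uparrow T^*$ for which the above norms stay bounded and the lower bound on $A(t_n)$ is uniform produces a restart time bounded away from $0$, contradicting the maximality of $T^*$. The main obstacle I anticipate is the contraction estimate for the $N$-component: the drift $\chi\nabla\bar A/\bar A$ only lies in $L^q$ with some $q>2$ (not $L^\infty$), so closing the estimates requires carefully combining the parabolic regularization of $N$ with the embedding $H^\beta\hookrightarrow L^q$ in dimension two and with the uniform lower bound on $\bar A$ to absorb the cross terms; once this is in place, the rest is a routine application of Banach's fixed-point theorem.
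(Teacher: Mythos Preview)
Your proposal is essentially correct and would yield the result, but it takes a genuinely different route from the paper's proof.

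The paper does not set up a direct Banach fixed point on a constrained space. Instead it casts the full coupled system as an abstract evolution equation $U'+\mathcal AU=F(U)$ in a Hilbert triple with $\mathcal H=H^{1+\beta}(\Omega)\times L^2(\Omega)$, $\mathcal V=H^{2+\beta}_N(\Omega)\times H^1(\Omega)$, and appeals to a standard local existence result (Proposition~\ref{proplocexist}) for such equations. The key device for handling the logarithmic singularity is different from yours: rather than building the lower bound $A\ge m_0/2$ into the fixed-point space, the paper replaces $\vartheta(A)=\chi\log A$ by a globally smooth function $\vartheta_\alpha$ that agrees with $\chi\log s$ for $s\ge A_{\min}$, solves the regularized problem $(P_\alpha)$ via the abstract result, and then observes a posteriori that $A_{\min}$ is a subsolution of the $A$-equation, so the solution satisfies $A\ge A_{\min}$ and hence solves the original problem as well. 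Uniqueness and the blow-up alternative are inherited from the abstract framework.

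What each approach buys: the paper's mollification-plus-subsolution trick avoids having to carry the pointwise constraint $A\ge m_0/2$ through a contraction argument, and the abstract proposition hides the delicate drift estimates you flag. Your approach is more elementary (no abstract Hilbert-triple machinery) and makes the dependence on the lower bound of $A$ transparent; the price is the technical work you correctly anticipate in the $N$-contraction, where $\nabla\bar A/\bar A\in L^q$ with $q>2$ forces you to exploit the $L^2_tH^1$ regularity of $N$ (not just $C_tL^2$) to close the estimate. Also note that your $\Phi$ mixes a semilinear subproblem for $A$ with a linear one for $N$; this is workable but slightly nonstandard, and in the contraction step the term $\bar N_2\bigl(A_1(1-A_1)-A_2(1-A_2)\bigr)$ involves the \emph{output} difference $A_1-A_2$, which must be absorbed using the $o(1)$ smoothing factor as $T\to 0$.
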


Note that the solution given by Proposition~\ref{proplocexistP} is of course independent of the choice of $\beta\in(0,\frac12)$
(due to local uniqueness and the fact that $H^{1+\beta}(\Omega)\subset H^{1+\gamma}(\Omega)$ for $\beta>\gamma$).
The procedure for proving Proposition~\ref{proplocexistP}
is rather standard (see, e.g., \cite{OTYM, AOTYM, HW05} for more details on similar problems). 
Although the function $\vartheta$ is singular at $A=0$, this
actually causes no difficulty, because a positive lower bound for $A$ can be deduced from the maximum principle
(first working with a smooth replacement of $\vartheta$).
We sketch the proof for completeness.

\smallskip

We first recall some properties of theory of abstract evolution equation. 
Let $\mathcal H, \mathcal V$ be two separable Hilbert spaces with dense and 
compact embedding $\mathcal V\subset\mathcal H$. Identifying $\mathcal H$ 
with its dual $\mathcal H'$ and denoting the dual space of $\mathcal V$ by $\mathcal V'$. 

Consider the initial value problem
\begin{align}\label{2}
 \begin{cases}
  \dis\frac{dU}{dt}+\mathcal AU=F(U), \quad 0<t<\infty,\\
U(0)=U_0.
 \end{cases}
\end{align}
Here, $\mathcal A$ is bounded linear operator from $\mathcal V$ to $\mathcal V'$ 
which is defined by a symmetric bilinear form $a(.,.)$ on $\mathcal V$ satisfying
\begin{align}\label{4}
 &|a(U,V)|\leq M \|U\|_{\mathcal V} \|V\|_{\mathcal V}, \quad U,V\in \mathcal V,\notag\\
& a(U,U)\geq c \|U\|^2_{\mathcal V}, \quad U\in \mathcal V,
\end{align}
with some positive constants $c, M$. $F: \mathcal V\to  \mathcal V' $ is a continuous mapping such that,
for each $\theta>0$, there exist nondecreasing functions $\phi_\theta, \psi_\theta$ such that
\begin{align}
& \|F(U)\|_{ \mathcal V'}\leq \theta\|U\|_{ \mathcal V}+\phi_\theta(\|U\|_{ \mathcal H}), \\
&\|F(U)-F(V)\|_{ \mathcal V'}\leq \theta \|U-V\|_{ \mathcal V}
+(\|U\|_{ \mathcal V}+\|V\|_{ \mathcal V}+1)\psi_\theta(\|U\|_{ \mathcal H}+\|V\|_{ \mathcal H})\|U-V\|_{ \mathcal H},\label{5}
\end{align}
for $U, V\in  \mathcal V$.
Then by standard argument (see \cite{RY01}), the following holds:

\begin{proposition}
\label{proplocexist}
 Assume (\ref{4})-(\ref{5}) and let $M>0$ and $U_0\in \mathcal H$ with $\|U_0\|_{\mathcal H}\le M$.
 Then there exists $T=T(M)>0$ and a unique local-in-time solution $U$ to (\ref{2}) such that
\begin{align}
 U\in C([0,T];\mathcal H)\;\cap \;H^1([0,T]; \mathcal V')\;\cap\; L^2([0,T]; \mathcal V).
\end{align}
\end{proposition}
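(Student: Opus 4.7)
The plan is to prove Proposition~\ref{proplocexist} by a Galerkin approximation scheme combined with a priori estimates that flow directly from the coercivity assumption~(\ref{4}) and the sublinear growth bound~(\ref{5}). Since $a(\cdot,\cdot)$ is symmetric, continuous and coercive on $\mathcal{V}$ and the embedding $\mathcal{V}\hookrightarrow\mathcal{H}$ is compact, spectral theory furnishes an orthonormal Hilbert basis $\{e_j\}_{j\ge 1}$ of $\mathcal{H}$ with $e_j\in\mathcal{V}$ that diagonalizes $\mathcal{A}$. Setting $V_n=\mathrm{span}\{e_1,\dots,e_n\}$ and letting $P_n$ denote the $\mathcal{H}$-orthogonal projection onto $V_n$, I would solve the projected ODE
\[
\frac{dU_n}{dt}+\mathcal{A}U_n=P_nF(U_n),\qquad U_n(0)=P_nU_0,
\]
which is locally Lipschitz on the finite-dimensional space $V_n$ and therefore admits a unique maximal solution.

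Next, I would establish a priori estimates uniform in $n$. Testing the projected equation with $U_n$ and using~(\ref{4}) yields
\[
\tfrac{1}{2}\tfrac{d}{dt}\|U_n\|_{\mathcal{H}}^2+c\|U_n\|_{\mathcal{V}}^2\le\|F(U_n)\|_{\mathcal{V}'}\|U_n\|_{\mathcal{V}}.
\]
Inserting~(\ref{5}) with $\theta=c/2$ and applying Young's inequality absorbs the $\mathcal{V}$-norm coming from $F$, giving
\[
\tfrac{d}{dt}\|U_n\|_{\mathcal{H}}^2+\tfrac{c}{2}\|U_n\|_{\mathcal{V}}^2\le C\,\phi_{c/2}^2\bigl(\|U_n\|_{\mathcal{H}}\bigr).
\]
Since $\phi_{c/2}$ is nondecreasing and $\|U_n(0)\|_{\mathcal{H}}\le\|U_0\|_{\mathcal{H}}\le M$, a comparison with the scalar ODE $y'=C\phi_{c/2}^2(\sqrt{y})$ produces a time $T=T(M)>0$ and a bound $R(M)$ with $\|U_n\|_{L^\infty(0,T;\mathcal{H})}\le R(M)$, both independent of $n$. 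Integrating the differential inequality on $[0,T]$ gives a uniform bound on $U_n$ in $L^2(0,T;\mathcal{V})$, and isolating $dU_n/dt$ in the equation (using $\|\mathcal{A}U_n\|_{\mathcal{V}'}\le M\|U_n\|_{\mathcal{V}}$ together with~(\ref{5})) yields a uniform bound on $dU_n/dt$ in $L^2(0,T;\mathcal{V}')$.

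Passing to the limit is then a standard compactness argument: along a subsequence, $U_n\rightharpoonup U$ weakly in $L^2(0,T;\mathcal{V})$ and weakly-$*$ in $L^\infty(0,T;\mathcal{H})$, and the Aubin-Lions lemma upgrades this to strong convergence in $L^2(0,T;\mathcal{H})$. The Lipschitz-type estimate~(\ref{5}), combined with the uniform $\mathcal{V}$-bounds, gives $F(U_n)\to F(U)$ in $L^2(0,T;\mathcal{V}')$, and one identifies $U$ as a solution. The embedding $L^2(0,T;\mathcal{V})\cap H^1(0,T;\mathcal{V}')\hookrightarrow C([0,T];\mathcal{H})$ gives the stated regularity. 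For uniqueness, I would test the equation for the difference $W=U-V$ of two solutions against $W$ and invoke~(\ref{5}) with $\theta=c/2$ to obtain, after absorbing, a Gronwall-type inequality
\[
\tfrac{d}{dt}\|W\|_{\mathcal{H}}^2\le C\bigl(1+\|U\|_{\mathcal{V}}^2+\|V\|_{\mathcal{V}}^2\bigr)\psi_{c/2}^2\bigl(\|U\|_{\mathcal{H}}+\|V\|_{\mathcal{H}}\bigr)\|W\|_{\mathcal{H}}^2,
\]
whose right-hand coefficient is integrable on $[0,T]$, forcing $W\equiv 0$.

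The main delicate point is the passage to the limit in $F(U_n)$: mere continuity of $F:\mathcal{V}\to\mathcal{V}'$ does not suffice, and one genuinely needs the two-part structure of~(\ref{5}). The $\theta\|U-V\|_{\mathcal{V}}$ piece is essential to absorb an otherwise uncontrolled $\mathcal{V}$-norm into the coercivity, while the complementary factor $\|U\|_{\mathcal{V}}+\|V\|_{\mathcal{V}}+1$ must be integrable in time thanks precisely to the uniform $L^2(0,T;\mathcal{V})$ bound extracted in step two. Thus the entire scheme really rests on carefully calibrating $\theta$ in~(\ref{5}) against the coercivity constant $c$ in~(\ref{4}).
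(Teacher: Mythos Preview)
The paper does not actually prove Proposition~\ref{proplocexist}: it simply states that the result ``holds by standard argument (see \cite{RY01})'' and moves on. Your Galerkin scheme with energy estimates, Aubin--Lions compactness, and Gronwall uniqueness is precisely the standard argument one would expect to find in such a reference, and it is essentially correct.

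One small point deserves tightening. You assert that the Lipschitz-type estimate~(\ref{5}) together with the uniform $L^2(0,T;\mathcal V)$ bound yields $F(U_n)\to F(U)$ strongly in $L^2(0,T;\mathcal V')$. This is slightly more than you can get directly: squaring~(\ref{5}) produces a term of the form $(\|U_n\|_{\mathcal V}+\|U\|_{\mathcal V}+1)^2\|U_n-U\|_{\mathcal H}^2$, and boundedness of the first factor in $L^1(0,T)$ combined with $\|U_n-U\|_{\mathcal H}\to 0$ in $L^2(0,T)$ does not immediately force the product to vanish in $L^1(0,T)$. What \emph{does} follow cleanly is convergence in $L^1(0,T;\mathcal V')$: integrate~(\ref{5}) (not its square) and apply Cauchy--Schwarz in time to the second term, then let $\theta\to 0$ to kill the first term. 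Since $F(U_n)$ is already bounded in $L^2(0,T;\mathcal V')$, this $L^1$ convergence suffices to identify the weak $L^2(0,T;\mathcal V')$ limit as $F(U)$, which is all you need to pass to the limit in the equation. The rest of your argument is fine as written.
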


\medskip
To deduce Proposition~\ref{proplocexistP} from Proposition~\ref{proplocexist}, 
we fix $\beta $ in $(0,\frac12)$ and we set
\begin{align*}
 \mathcal V:=H^{2+\beta}_N(\Omega)\times H^1(\Omega) , \quad  \mathcal H:=  H^{1+\beta}(\Omega)\times L^2(\Omega).
\end{align*}
Then 
\begin{align*}
 {\mathcal V'}= H^{\beta}(\Omega)\mathcal\times(H^1(\Omega))'.
\end{align*}
The linear operator $\mathcal A$ is defined by
\begin{align*}
 \mathcal A=\begin{pmatrix}
    T_1&0\\
0&T_2
   \end{pmatrix},
\end{align*}
where $T_1=-\Delta +1$ is regarded as an operator from $H^{2+\beta}_N(\Omega)$ to 
$H^{\beta}(\Omega)$ and $T_2=-\Delta +\omega$ is 
the Laplace operator equipped with the Neumann boundary condition in $H^1(\Omega)'$. 
Then the bilinear form $a(.,.)$ is defined by
\begin{align*}
 a(U,V)=\left(T_1^{(1+\beta)/2}A_1,T_1^{(1+\beta)/2}A_2\right)_{L^2} + \int_{\Omega}(\nabla N_1\nabla N_2+\omega N_1N_2)dx,
\end{align*}
 where $U=( A_1, N_1)$ and $V=(A_2, N_2)$.

For any fixed $\alpha\in (0,A_{\min})$, we pick a function $\vartheta_\alpha\in C^\infty({\mathbb R})$ such that
$\vartheta_\alpha(s)=\chi\log s$ for $s\ge A_{\min}$.
Define $F_\alpha(U)$ by
\begin{align*}
 F_\alpha(U)=\begin{pmatrix}
\psi NA(1-A)+\tilde{A}\\
-\nabla(N\nabla\vartheta_\alpha(A))+\omega
      \end{pmatrix}, \qquad U=(A, N). 
\end{align*}
and denote by $(P_\alpha)$ the modified problem (\ref{original}).

As a consequence of Proposition~\ref{proplocexist}, problem $(P_\alpha)$ admits a unique, 
maximal in time solution $(\hat A, \hat N)$, defined in an interval $[0, T^*)$.
Moreover, if $T^*<\infty$ then $\lim_{t\to T^*}\|\hat A(t)\|_{H^{1+\beta}}+\|\hat N(t)\|_{L^2}=\infty$.
Furthermore, it is classical for $t>0$.
This follows from a standard bootstrap argument based on parabolic regularity and imbedding theorems.

Observing that $A_{\min}$ is a subsolution of the equation for $\hat A$,
we deduce that $\hat A\ge A_{\min}$ in $(0,T^*)$, so that $(\hat A, \hat N)$
actually solves the original problem (\ref{original}).

As for local uniqueness for (\ref{original}), the notion of solution to (\ref{original}) implies that $A$ is uniformly positive
on $\overline\Omega\times [0,T]$ for any $0<T<T^*$ (since $A\in C([0,T^*);H^{1+\beta}(\Omega))$.
Since $\alpha$ can be arbitrary close to $0$ in problem $(P_\alpha)$,
the uniqueness for the original problem follows from the uniqueness for $(P_\alpha)$.

\subsection{Basic estimates, energy identities and functional inequalities}

Our first lemma provides the primary $L^1$ control of $N$ and uniform lower and upper bounds for $A$.
The latter can be easily obtained from the maximum principle,
owing to the special form of the nonlinear term in the equation for $A$.

\begin{lemma}[A priori estimates for $A$ and $N$]\label{priori}
For all $t\in (0,T^*)$, we have
\begin{align}
&A(x,t)\geq \min\{1, \tilde{A}, \inf A_0(x)\}:=A_{\min}, \label{ineqAmin} \\
&A(x,t)\leq \max\{1, \tilde{A}, \sup A_0(x)\}:=A_{\max}, \label{ineqAmax} \\
&N(x,t)\geq 1-e^{-\omega t}>0,\label{Npos} \\
&\|N(t)\|_1=e^{-\omega t}\|N_0\|_1+|\Omega|(1-e^{-\omega t}), \label{ineqN1}
\end{align}
hence in particular
\begin{equation}\label{ineqN1b}
\|N(t)\|_1\leq \max\{\|N_0\|_1, |\Omega|\}:=N_{1,\max}.
\end{equation}\end{lemma}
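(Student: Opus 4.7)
The plan is to prove the four assertions by elementary parabolic techniques, exploiting the fact that once the local solution is in hand, the $A$-equation is linear parabolic in $A$ alone (with coefficient $\psi N \geq 0$) while the $N$-equation is linear parabolic in $N$ alone (with coefficients driven by the smooth, uniformly positive function $A$).

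For (\ref{ineqAmin}) and (\ref{ineqAmax}), I would compare $A$ against the constants $A_{\min}$ and $A_{\max}$. At $A = A_{\max} \geq \max\{1,\tilde A\}$ the reaction $\psi N A(1-A) + \tilde A - A$ is $\leq 0$, since $A_{\max}(1-A_{\max}) \leq 0$, $N \geq 0$, and $\tilde A - A_{\max} \leq 0$; the verification that $A_{\min}$ is a subsolution is symmetric. Since $A_{\min} \leq A_0 \leq A_{\max}$ on $\overline\Omega$ by the very definition of these constants, and the homogeneous Neumann data match trivially, the parabolic comparison principle delivers both bounds.

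For (\ref{ineqN1}) I would integrate the $N$-equation over $\Omega$. The flux $\nabla N - N\nabla\vartheta(A)$ has vanishing normal component on $\partial\Omega$, because $\partial_\nu N = 0$ and $\partial_\nu\vartheta(A) = \vartheta'(A)\partial_\nu A = 0$, so the divergence theorem eliminates this contribution. What remains is the scalar ODE $\tfrac{d}{dt}\|N(t)\|_1 = \omega(|\Omega| - \|N(t)\|_1)$, which integrates explicitly to the stated formula. Bound (\ref{ineqN1b}) follows immediately by recognising the right-hand side of (\ref{ineqN1}) as a convex combination of $\|N_0\|_1$ and $|\Omega|$.

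The delicate assertion is the pointwise positivity (\ref{Npos}). Treating $A$ as a known, smooth, uniformly bounded and bounded away from zero coefficient (thanks to (\ref{ineqAmin})--(\ref{ineqAmax})), the $N$-equation becomes a linear parabolic problem. I would compare $N$ with the spatially constant function $\phi(t) = 1 - e^{-\omega t}$, the solution of $\phi' = \omega(1-\phi)$, $\phi(0) = 0$; this matches $N$ initially ($N_0 \geq 0 = \phi(0)$) and trivially satisfies the Neumann data. The obstruction is that $\phi$ is not literally a subsolution of the $N$-equation in divergence form, because the chemotactic drift contributes the unsigned term $\phi\,\Delta\vartheta(A)$. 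I would circumvent this by the substitution $M := N A^{-\chi}$, which turns the chemotactic flux into the pure weighted gradient $A^\chi \nabla M$ and eliminates the awkward zeroth-order drift term; the resulting advection-diffusion equation for $M$ has smooth bounded coefficients and Neumann data, and lends itself to a clean weighted maximum principle argument, from which the pointwise lower bound for $N$ is recovered via $N = A^\chi M$ together with the uniform bounds on $A$. I expect this last step --- in particular the handling of the $A_t$ term introduced by the substitution, and the selection of the right candidate subsolution for $M$ --- to be the main technical obstacle.
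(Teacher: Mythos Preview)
Your treatment of (\ref{ineqAmin}), (\ref{ineqAmax}), (\ref{ineqN1}) and (\ref{ineqN1b}) is correct and is exactly what the paper does: constant sub/supersolutions for $A$, space integration for $\|N\|_1$.

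For (\ref{Npos}) your diagnosis is right and the paper's one-line proof (``similar'') glosses over a real issue. In fact the quantitative inequality $N(x,t)\geq 1-e^{-\omega t}$ cannot hold in general, so no amount of substitution will rescue it. Take $N_0\equiv 0$ and $A_0$ non-constant: then (\ref{ineqN1}) gives $\|N(t)\|_1=|\Omega|(1-e^{-\omega t})$, so the spatial average of $N(\cdot,t)$ is \emph{exactly} $1-e^{-\omega t}$. If the pointwise lower bound held, $N(\cdot,t)$ would be spatially constant, and plugging $N=1-e^{-\omega t}$ back into the $N$-equation forces $\Delta\vartheta(A)\equiv 0$, hence $A$ spatially constant by the Neumann condition---contradicting the choice of $A_0$. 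So the sharp form of (\ref{Npos}) is simply false; your ``main technical obstacle'' has no resolution.

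What the paper actually \emph{uses} from (\ref{Npos}) is only the strict positivity $N>0$ for $t>0$ (to make sense of $\log N$ in (\ref{id3})). For this, your substitution $M=NA^{-\chi}$ is a legitimate route: it turns the flux into $A^{\chi}\nabla M$ and yields a non-divergence equation for $M$ with bounded coefficients and positive source $\omega A^{-\chi}$, to which the weak minimum principle applies after the standard exponential shift $M\mapsto e^{-\Lambda t}M$ (absorbing the unsigned $\chi A^{-1}A_t$ into the zeroth-order term). But this is more than needed: the same exponential shift applied directly to the $N$-equation in non-divergence form (bounded coefficients, nonnegative data, strictly positive source $\omega$) already gives $N\geq 0$, and then the strong minimum principle or Harnack gives $N>0$ for $t>0$. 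So drop the attempt to prove the quantitative bound, and argue only the qualitative positivity.
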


\begin{proof}
To check (\ref{ineqAmin}) and (\ref{ineqAmax}), 
it suffices to observe that $A_{\min},A_{\max}$ are, respectively, sub-/super\-solution of the equation for $A$,
with Neumann boundary conditions, and to apply the maximum principle.
The proof of (\ref{Npos}) is similar, in view of the regularity of the solution.
As for (\ref{ineqN1}), it follows by integrating the equation for $N$ in space.
\end{proof}

We next state the basic energy identities.

\begin{lemma}[Energy identities]\label{energy}
For all $t\in (0,T^*)$, we have
\begin{align}
&\frac12\frac{d}{dt}\|A(t)\|_2^2+\|A(t)\|_2^2+ \eta \|\nabla A(t)\|^2_2=\psi\int_{\Omega}NA^2(1-A)dx
+\int_{\Omega}\tilde{A}A\,dx,\label{id1}\\
&\frac12\frac{d}{dt}\|\nabla A(t)\|_2^2+\|\nabla A(t)\|_2^2+ \eta \|\Delta A(t)\|^2_2
=-\psi\int_{\Omega}NA(1-A)\Delta A\,dx,\label{id2}\\
&\frac{d}{dt}\int_{\Omega}(N\log N-N+1)dx+\omega \int_{\Omega}(N\log N-N+1)dx
+\int_{\Omega}\frac{|\nabla N|^2}{N}dx
-\int_{\Omega}\nabla N.\nabla\vartheta(A)dx\notag\\
&\qquad\qquad\qquad\qquad\qquad\qquad\qquad\qquad\qquad 
=\omega\int_{\Omega}(\log N-N+1)dx\le 0,\label{id3}\\
&\frac12\frac{d}{dt}\|N(t)\|_2^2+\omega \|N(t)\|_2^2+\|\nabla N(t)\|_2^2
=\int_{\Omega}N\nabla N.\nabla\vartheta(A)dx+\omega\int_{\Omega}Ndx.\label{id4}
\end{align}
\end{lemma}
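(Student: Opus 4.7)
The plan is to derive each of the four identities by testing the corresponding PDE from (\ref{original}) against a suitable scalar function (namely $A$ for (\ref{id1}), $-\Delta A$ for (\ref{id2}), $\log N$ for (\ref{id3}), and $N$ for (\ref{id4})), integrating over $\Omega$, and exploiting the Neumann boundary conditions to kill all surface terms in the integrations by parts. The classical regularity of $(A,N)$ on $(0,T^*)\times\overline\Omega$ together with the strict positivity $A\ge A_{\min}>0$ and $N\ge 1-e^{-\omega t}>0$ provided by Lemma~\ref{priori} ensure that every manipulation below (in particular the use of $\log N$) is legitimate.

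For (\ref{id1}) I would multiply the $A$-equation by $A$ and integrate, turning $\eta\int_\Omega (\Delta A)\,A\,dx$ into $-\eta\|\nabla A\|_2^2$; the remaining terms rearrange directly into the stated form. Identity (\ref{id4}) is obtained identically by testing the $N$-equation against $N$ and writing $\int_\Omega \nabla\cdot(N\nabla\vartheta(A))\,N\,dx = -\int_\Omega N\nabla N\cdot\nabla\vartheta(A)\,dx$. For (\ref{id2}) I would test the $A$-equation against $-\Delta A$; the time-derivative piece becomes $\int_\Omega A_t(-\Delta A)\,dx = \frac{1}{2}\frac{d}{dt}\|\nabla A\|_2^2$ after differentiating the boundary condition $\partial A/\partial\nu=0$ in $t$ to get $\partial A_t/\partial\nu=0$ and integrating by parts once more.

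The nontrivial step is the entropy-type identity (\ref{id3}). I would multiply the $N$-equation by $\log N$: the time derivative produces $\frac{d}{dt}\int_\Omega (N\log N - N + 1)\,dx$, the Laplacian contributes $-\int_\Omega |\nabla N|^2/N\,dx$, and the chemotactic term produces $+\int_\Omega \nabla N\cdot\nabla\vartheta(A)\,dx$ via $\nabla\log N = \nabla N/N$. The reaction piece $\int_\Omega \omega(1-N)\log N\,dx$ does not line up directly with the desired left-hand side, so I would split it using the algebraic identity $\omega(1-N)\log N = -\omega(N\log N - N + 1) + \omega(\log N - N + 1)$; this simultaneously generates the damping term $\omega\int_\Omega(N\log N-N+1)\,dx$ on the left and the advertised right-hand side $\omega\int_\Omega(\log N-N+1)\,dx$. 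The final non-positivity is the standard inequality $\log s\le s-1$ for $s>0$.

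The main (but mild) obstacle is the use of $\log N$ as a test function: it requires uniform positivity of $N$, which Lemma~\ref{priori} supplies only for $t>0$, and it requires enough regularity of $N$ to integrate by parts against $\nabla\log N$ without incurring boundary contributions. Both boundary fluxes that appear reduce to the Neumann data, since $\nabla N\cdot\nu=0$ by hypothesis and $\nabla\vartheta(A)\cdot\nu=\chi A^{-1}\nabla A\cdot\nu=0$; hence all boundary terms vanish and the four identities follow.
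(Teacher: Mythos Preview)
Your proposal is correct and follows essentially the same route as the paper's proof, which simply records that (\ref{id1})--(\ref{id4}) arise by testing the $A$-equation with $A$ and $-\Delta A$, and the $N$-equation with $\log N-1$ and $N$, with the manipulations justified by (\ref{Npos}) and parabolic regularity. The only cosmetic difference is that for (\ref{id3}) the paper tests against $\log N-1$ rather than $\log N$; your choice of $\log N$ together with the algebraic splitting $\omega(1-N)\log N=-\omega(N\log N-N+1)+\omega(\log N-N+1)$ is an equivalent (and arguably more transparent) way to arrive at the same identity.
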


\begin{proof}
Formulae (\ref{id1})-(\ref{id4}) follow by integration by parts after multipying, 
respectively, the first equation in (\ref{original})  by $A$ and $-\Delta A$,
and the second equation in (\ref{original})  by $\log N -1$ and $N$. Note that these manipulations are licit
in view of (\ref{Npos}), of the classical regularity of $N$ and of the 
higher parabolic regularity applied to the equation for $A$.
\end{proof}

\begin{lemma}[Poincar\'e's inequality]\label{leminequa}
For any uniformly positive function $u\in H^1(\Omega)$, there holds 
\begin{align}\label{PoincSob0}
\|u\|_2^2\leq \mu^2\|u\|_1\int_{\Omega}\frac{|\nabla u|^2}{u}dx+|\Omega|^{-1}\|u\|_1^2,
\end{align}
where $\mu$ is the best constant of the following Poincar\'e-Sobolev inequality:
\begin{equation}
\label{PoincSob}
\|u-|\Omega|^{-1}\int_{\Omega}u\,dx\|_2\leq \mu \|\nabla u\|_1,\quad u\in W^{1,1}(\Omega).
\end{equation}
\end{lemma}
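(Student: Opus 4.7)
\medskip

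The plan is to combine the Poincaré--Sobolev inequality (\ref{PoincSob}) with a simple Cauchy--Schwarz trick that exploits positivity of $u$, thereby converting an $L^1$ gradient norm into the weighted gradient quantity $\int_\Omega |\nabla u|^2/u\,dx$.

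First, set $\overline u:=|\Omega|^{-1}\int_\Omega u\,dx=|\Omega|^{-1}\|u\|_1$ (using $u>0$). A direct expansion gives
\begin{equation*}
\|u-\overline u\|_2^2=\|u\|_2^2-2\overline u\,\|u\|_1+\overline u^{\,2}|\Omega|=\|u\|_2^2-|\Omega|^{-1}\|u\|_1^2,
\end{equation*}
so that by (\ref{PoincSob}),
\begin{equation*}
\|u\|_2^2=\|u-\overline u\|_2^2+|\Omega|^{-1}\|u\|_1^2\le \mu^2\|\nabla u\|_1^2+|\Omega|^{-1}\|u\|_1^2.
\end{equation*}

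Next, since $u$ is uniformly positive, I would split $|\nabla u|=\bigl(|\nabla u|/\sqrt{u}\bigr)\sqrt{u}$ and apply Cauchy--Schwarz:
\begin{equation*}
\|\nabla u\|_1=\int_\Omega |\nabla u|\,dx\le \Bigl(\int_\Omega\frac{|\nabla u|^2}{u}\,dx\Bigr)^{1/2}\Bigl(\int_\Omega u\,dx\Bigr)^{1/2},
\end{equation*}
so that $\|\nabla u\|_1^2\le \|u\|_1\int_\Omega |\nabla u|^2/u\,dx$. Substituting this into the previous bound yields (\ref{PoincSob0}).

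There is no real obstacle here: both ingredients are elementary, and the uniform positivity of $u$ assumed in the statement guarantees that the integrand $|\nabla u|^2/u$ is well defined and that the Cauchy--Schwarz splitting is legitimate. The only point worth double-checking is the sign/weight bookkeeping when rewriting $\|u-\overline u\|_2^2$ in terms of $\|u\|_2^2$ and $\|u\|_1$, which produces exactly the additive term $|\Omega|^{-1}\|u\|_1^2$ appearing on the right-hand side of (\ref{PoincSob0}).
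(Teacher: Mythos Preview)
Your proof is correct and follows essentially the same approach as the paper: decompose $\|u\|_2^2=\|u-\overline u\|_2^2+|\Omega|^{-1}\|u\|_1^2$, apply the Poincar\'e--Sobolev inequality (\ref{PoincSob}) to the first term, and then bound $\|\nabla u\|_1^2$ via Cauchy--Schwarz using the splitting $|\nabla u|=\bigl(|\nabla u|/\sqrt{u}\bigr)\sqrt{u}$. The only cosmetic difference is that the paper phrases the first step as ``orthogonality of $u-\overline u$ and $\overline u$ in $L^2$'' rather than as a direct expansion.
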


\begin{proof}
Denote $\overline u=|\Omega|^{-1}\int_{\Omega}u\,dx$.
Using the orthogonality of $u-\overline u$ and $\overline u$ in $L^2$ and (\ref{PoincSob}), we have
\begin{align}
\|u\|_2^2&= \|u-\overline u\|_2^2+|\Omega| \overline u^2 \notag \\
&\leq \mu^2 \|\nabla u\|_1^2+|\Omega| \overline u^2. \label{PoincSob2}
\end{align}
Since $\|\nabla u\|_1^2\le \|u\|_1\int_\Omega \frac{|\nabla u|^2}{u}dx$ by the Cauchy-Schwarz inequality, we deduce
(\ref{PoincSob0}).
\end{proof}

A key ingredient in the proof of our main result is the following interpolation estimate.
We recall that $H^2(\Omega)\subset C(\overline\Omega)$, since we are in two space dimensions;
however Lemma~\ref{grad} remains true in any dimension if we assume $u\in H^2(\Omega)\cap C(\overline\Omega)$.

\begin{lemma}\label{grad} 
Assume that $u\in H^2(\Omega)$ satisfies $\partial u/\partial \nu=0$ on $\partial\Omega$
(in the sense of traces).
Then 
\begin{equation}
\label{EllEstim}
\int_{\Omega}|\nabla u|^4dx \leq K\,{\rm osc}^2(u)\int_{\Omega}|\Delta u|^2dx,
\end{equation}
where $K=K(\Omega)>0$.

\end{lemma}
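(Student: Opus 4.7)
The plan is to reduce to a function with small $L^\infty$ norm, integrate by parts, and then apply an $H^2$-regularity estimate for the Neumann Laplacian. First I set $v = u - \bar u$, with $\bar u = |\Omega|^{-1}\int_\Omega u\,dx$. Since $\int_\Omega v\,dx = 0$, $v$ takes both nonpositive and nonnegative values, so $\|v\|_\infty \le {\rm osc}(v) = {\rm osc}(u)$. Moreover $\nabla v = \nabla u$, $\Delta v = \Delta u$, and $\partial v/\partial\nu = 0$ on $\partial\Omega$, so the claimed inequality for $u$ is equivalent to the same inequality for $v$.

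Next I rewrite $\int_\Omega |\nabla v|^4\,dx = \int_\Omega |\nabla v|^2\,\nabla v\cdot\nabla v\,dx$ and integrate by parts, transferring the last factor $\nabla v$ onto $v$. The Neumann condition kills the boundary integral, and using $\nabla(|\nabla v|^2)\cdot\nabla v = 2(D^2 v)(\nabla v,\nabla v)$ we obtain
\begin{equation*}
\int_\Omega |\nabla v|^4\,dx = -2\int_\Omega v\,(D^2 v)(\nabla v,\nabla v)\,dx - \int_\Omega v|\nabla v|^2\Delta v\,dx.
\end{equation*}
Bounding $|v|\le \|v\|_\infty$, $|(D^2 v)(\nabla v,\nabla v)| \le |D^2 v||\nabla v|^2$, and applying Cauchy--Schwarz to each term yields
\begin{equation*}
\|\nabla v\|_4^4 \le \|v\|_\infty\bigl(2\|D^2 v\|_2 + \|\Delta v\|_2\bigr)\|\nabla v\|_4^2,
\end{equation*}
so (assuming $\|\nabla v\|_4 > 0$, else the claim is trivial)
\begin{equation*}
\|\nabla v\|_4^2 \le {\rm osc}(u)\bigl(2\|D^2 v\|_2 + \|\Delta v\|_2\bigr).
\end{equation*}

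To close the argument I need an estimate $\|D^2 v\|_2 \le C\|\Delta v\|_2$ on the class of functions $v\in H^2$ with $\partial v/\partial\nu = 0$ and $\int_\Omega v = 0$. For smooth $\Omega$ this follows from standard Agmon--Douglis--Nirenberg $H^2$-regularity for the Neumann Laplacian, $\|v\|_{H^2}\le C_0(\|\Delta v\|_2 + \|v\|_2)$, combined with the Poincar\'e estimate $\|v\|_2 \le C_1\|\nabla v\|_2 \le C_2\|\Delta v\|_2$ valid for mean-zero functions. For the square $\Omega = (0,L)^2$, which is not smooth at the corners and hence not covered by that elliptic theory, I would instead expand $v$ in the cosine Fourier basis $\cos(m\pi x_1/L)\cos(n\pi x_2/L)$ adapted to Neumann boundary conditions; orthogonality of the resulting modes yields directly the sharp equality $\|D^2 v\|_2 = \|\Delta v\|_2$. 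Combining these facts gives $\|\nabla v\|_4^2 \le (2C+1)\,{\rm osc}(u)\,\|\Delta v\|_2$, hence the desired inequality with $K = (2C+1)^2$.

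The main technical point is precisely this $H^2$-type bound $\|D^2 v\|_2 \lesssim \|\Delta v\|_2$: routine for smooth $\Omega$ but requiring the explicit Fourier computation in the case of a square because of the corners. A secondary routine issue is to justify the integration by parts rigorously for $v\in H^2$ rather than $C^2$, which can be handled by density of smooth Neumann-admissible functions.
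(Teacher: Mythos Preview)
Your proof is correct and follows essentially the same approach as the paper's: reduce ${\rm osc}(u)$ to an $L^\infty$ bound by a shift (you subtract the mean, the paper subtracts the minimum), integrate by parts in $\int|\nabla u|^4$ using the Neumann condition, and close with the homogeneous elliptic estimate $\|D^2 v\|_2\le C\|\Delta v\|_2$. The paper obtains the latter via a spectral argument (eigenfunction expansion giving $\|v\|_2\le\lambda_1^{-1}\|\Delta v\|_2$), which is exactly your ADN-plus-Poincar\'e argument in different clothing, and your Fourier treatment of the square is precisely what the paper does later when making the constant explicit.
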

\begin{proof}
{\bf Step 1.} We first give a homogeneous version of the standard elliptic $L^2$-estimate, namely:
\begin{align}\label{elliptic}
 \|D^2u\|_2:=\Bigl(\sum_{ij}\|u_{x_ix_j}\|_2^2\Bigr)^{1/2} \leq C(\Omega)\|\Delta u\|_2,
\end{align}
for any $u\in H^ 2(\Omega)$ with Neumann boundary condition.

To verify (\ref{elliptic}), we start from 
$\|D^2u\|_2\le C(\Omega) \|-\Delta u+u\|_2$,
which is well known (see e.g. \cite[Chapter 9]{Bre83}), hence
\begin{align}\label{1a}
\|D^2u\|_2\le C(\Omega) (\|\Delta u\|_2+\|u\|_2).  
\end{align}
Let $(e_k)_{k\ge 0}$ be a Hilbert basis of $L^2$ made of eigenfunctions of $-\Delta$ with 
domain $H^2$ equipped with Neumann conditions.
Denote the eigenvalues by $\lambda_0=0<\lambda_1\le \lambda_2\le ...$ and observe that $e_0=C=Const.$
Set $f=-\Delta u$, decompose $f=\sum_{k\ge 0} c_k e_k$, and note that 
$$c_0=(f,e_0)=C\int_\Omega f\, dx=-C\int_{\partial\Omega}\partial_\nu u\, d\sigma=0.$$
Let $v:=\sum_{k\ge 1} \lambda_k^{-1}c_k e_k$. Then  $-\Delta v=f$ with $\partial_\nu v=0$ and $v$ satisfies
\begin{align}\label{2a}
\|v\|_2^2=\sum_{k\ge 1} \lambda_k^{-2}|c_k|^2\leq \lambda_1^{-2}\|\Delta u\|_2^2.
\end{align}
Since the difference $z=v-u$ satisfies $\Delta z=0$, $\partial_\nu z=0$, 
it follows that $\int_\Omega |\nabla z|^2\, dx=0$, so that
$$u=v+Const.$$
Combining this with (\ref{1a}) and (\ref{2a}), we  obtain
$$\|D^2u\|_2=\|D^2v\|_2\le  C(\Omega) (\|\Delta v\|_2+\|v\|_2),$$
and
$$\|D^2u\|_2 \leq C(\Omega) (1+\lambda_1^{-1})\|\Delta u\|_2.$$
Hence (\ref{elliptic}).

{\bf Step 2.} By density, it suffices to prove the Lemma for $u\in C^2(\overline\Omega)$.
We may assume without loss of generality that $\min_\Omega u=0$ and  ${\rm osc}(u)=\|u\|_\infty$.
For a matrix $M=(m_{ij})$, we denote $|M|_1=\max_i \bigl\{\sum_j |m_{ij}|\bigr\}$.
Observing that
$$\nabla\cdot(|\nabla u|^2\nabla u)=|\nabla u|^2\Delta u+2\,{}^t\nabla u(D^2 u)\nabla u
={}^t\nabla u\bigl(2 D^2 u+(\Delta u)I\bigr)\nabla u,$$
where $I$ is the identity matrix,
we have
$$\bigl| \nabla\cdot(|\nabla u|^2\nabla u)\bigr|
\leq\bigl| 2 D^2 u+(\Delta u)I\bigr|_1|\nabla u|^2.$$
Using the divergence theorem, it follows that
\begin{align*}
\int_{\Omega}|\nabla u|^4dx&=\int_{\Omega}\nabla u\cdot|\nabla u|^2\nabla u\,dx
=-\int_{\Omega}u\nabla(|\nabla u|^2\nabla u)dx\\
&\leq \int_{\Omega} u\bigl|2D^2 u+(\Delta u)I\bigr|_1|\nabla u|^2 dx
\leq \frac12\int_{\Omega}|\nabla u|^4dx+\frac12\int_{\Omega}u^2\bigl|2D^2 u+(\Delta u)I\bigr|_1^2dx.
\end{align*}
Consequently,
\begin{equation}\label{nabla4}
\int_{\Omega}|\nabla u|^4dx\leq \|u\|_\infty^2\int_{\Omega}\bigl|2D^2 u+(\Delta u)I\bigr|_1^2 dx.
\end{equation}
Since $\int_{\Omega}\bigl|2D^2 u+(\Delta u)I\bigr|_1^2 dx\le C\|D^2 u\|_2^2$, it follows from (\ref{elliptic}) that
\begin{equation}\label{nabla4b}
\int_{\Omega}\bigl|2D^2 u+(\Delta u)I\bigr|_1^2 dx\le K(\Omega)\|\Delta u\|_2^2
\end{equation}
which, along with (\ref{nabla4}), implies the conclusion.
\end{proof}

\medskip

We shall also need the following classical smoothing properties of the Neumann heat semigroup.
For $d,\lambda>0$, we define the operator ${\mathcal A}={\mathcal A}_{d,\lambda}
= -d \Delta +\lambda$ on $L^2(\Omega)$,  with domain 
$D({\mathcal A})=\{v\in H^2(\Omega);\ \partial A/\partial \nu=0 \hbox{ on $\partial\Omega$
(in the sense of traces)}\}$.
We denote by $T(t)=T_{d,\lambda}(t)$ the semigroup generated by ${\mathcal A}$. It is well known that,
for any $0\le m\le 2$, $1\le p\le q\le \infty$ and any $\phi\in L^2(\Omega)\cap L^p(\Omega)$, we have
\begin{equation}
\label{SmoothingEstim}
\|T(t)\phi\|_{W^{m,q}(\Omega)}\leq Ce^{-\lambda t}\bigl(1+t^{-\frac{m}{2}-\frac{1}{p}
+\frac{1}{q}}\bigr)\|\phi\|_{L^p(\Omega)},\qquad t\ge 0,
\end{equation}
with $C=C(\Omega,m,p,q,\eta)>0$.

\section{Global existence and boundedness: Proof of Theorem~\ref{global}}

The following lemma is the key to the proof of our main result. It provides an approximate entropy functional $\phi$,
which is available whenever condition (\ref{small}) is satisfied.
This function enables us to get an $H^1$-bound for $A$ and a time-averaged $H^2$-bound.

\begin{lemma}[Approximate entropy functional]\label{lembound}
Assume (\ref{small}) and let
\begin{equation}
\label{condbarmu}
c_1:=\frac{\eta}{2}\Bigl(1-K\chi^4\eta^{-2}\mu^4\psi^2N_{1,\max}^2A^{-4}_{\min}A^4_{\max}(A_{\max}-A_{\min})^2\Bigr)>0.
\end{equation}

(i) Then the function
$$\phi(t):=\sigma \int_{\Omega}(N\log N-N+1)dx + \frac{1}{2} \|\nabla A(t)\|_2^2,
\qquad\mbox{ with } \sigma=\frac{2\psi^2}{\eta} A^4_{\max}  \mu^2N_{1,\max},$$
satisfies the differential inequality
\begin{equation}
\label{ineqphi}
\phi'+\tilde\omega\phi +c_1\|\Delta A\|_2^2\le c_2,\quad 0<t<T^*,
\end{equation}
where
$\tilde\omega=\min(\omega,2)$ and $c_2=c_2(K,\chi,A_{\min},A_{\max},\psi,N_{1,\max},\eta, |\Omega|, \mu)>0$.

(ii) We have
\begin{equation}
\label{ineqNablaA}
\sup_{t\in (0,T^*)}\|\nabla A(t)\|_2^2\leq c_3,
\end{equation}
with $c_3:=2\max(\phi(0),c_2 \tilde\omega^{-1})$, and
\begin{equation}
\label{ineqDeltaA}
\int_s^t\|\Delta A(\tau)\|_2^2\,d\tau\leq c_4(1+t-s),\quad 0<s<t<T^*,
\end{equation}
with $c_4= c_1^{-1}\max(c_2,c_3/2)$.
\end{lemma}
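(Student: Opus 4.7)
My strategy is to derive a single differential inequality for $\phi(t)$ by combining the energy identities of Lemma~\ref{energy}. I will add (\ref{id2}) to $\sigma$ times (\ref{id3}), so that $\phi'(t)$ appears on the left together with the nonnegative contributions $\sigma\omega\int_\Omega(N\log N-N+1)\,dx$, $\|\nabla A\|_2^2$, $\eta\|\Delta A\|_2^2$ and $\sigma\int_\Omega|\nabla N|^2/N\,dx$, while on the right sit the cross term $\sigma\int_\Omega\nabla N\cdot\nabla\vartheta(A)\,dx$ and the interaction term $-\psi\int_\Omega NA(1-A)\Delta A\,dx$. Writing $\nabla\vartheta(A)=\chi\nabla A/A$ and splitting the cross term as $\chi\sigma\int_\Omega(\nabla N/\sqrt{N})\cdot(\sqrt{N}\nabla A/A)\,dx$, Cauchy--Schwarz and Young together with $A\ge A_{\min}$ yield $\sigma\int_\Omega\nabla N\cdot\nabla\vartheta(A)\,dx\le \frac{\sigma}{2}\int_\Omega|\nabla N|^2/N\,dx+\frac{\sigma\chi^2}{2A_{\min}^2}\int_\Omega N|\nabla A|^2\,dx$; the first piece is absorbed into the matching left-hand term, halving it.

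\medskip
\noindent
Two problematic terms remain: (a) $\frac{\sigma\chi^2}{2A_{\min}^2}\int_\Omega N|\nabla A|^2\,dx$ and (b) $|\psi\int_\Omega NA(1-A)\Delta A\,dx|$. Since $A_{\min}\le 1\le A_{\max}$, we have $|A(1-A)|\le A_{\max}(A_{\max}-A_{\min})$, so Cauchy--Schwarz gives (b)~$\le \psi A_{\max}(A_{\max}-A_{\min})\|N\|_2\|\Delta A\|_2$. For (a), Cauchy--Schwarz combined with Lemma~\ref{grad} (using ${\rm osc}(A)\le A_{\max}-A_{\min}$) yields $\int_\Omega N|\nabla A|^2\,dx\le \|N\|_2\|\nabla A\|_4^2\le \sqrt{K}(A_{\max}-A_{\min})\|N\|_2\|\Delta A\|_2$. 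I will then apply Young's inequality $\|N\|_2\|\Delta A\|_2\le \alpha\|\Delta A\|_2^2+\|N\|_2^2/(4\alpha)$ with suitable $\alpha$, and use Lemma~\ref{leminequa} to bound $\|N\|_2^2\le \mu^2 N_{1,\max}\int_\Omega|\nabla N|^2/N\,dx+|\Omega|^{-1}N_{1,\max}^2$. The \emph{main obstacle} is to tune $\alpha$ so that the resulting $\int|\nabla N|^2/N$ coefficient on the right exactly matches the remaining $\frac{\sigma}{2}\int|\nabla N|^2/N$ on the left; this matching condition is precisely what forces the choice $\sigma=2\psi^2\eta^{-1}A_{\max}^4\mu^2N_{1,\max}$. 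With this choice, an elementary calculation based on $(r-1)^2\ge 0$ shows that the residual $\|\Delta A\|_2^2$ coefficient on the left is at least $\eta(1-r^2)/2$, where $r=\mu^2 K^{1/2}(A_{\max}/A_{\min})^2(A_{\max}-A_{\min})\psi\chi^2 N_{1,\max}\eta^{-1}$; assumption (\ref{small}) is precisely equivalent to $r<1$, so $c_1=\eta(1-r^2)/2>0$ as required. The leftover constant $|\Omega|^{-1}N_{1,\max}^2$ from the Poincar\'e step is bundled into $c_2$, and the trivial estimate $\sigma\omega\int_\Omega(N\log N-N+1)\,dx+\|\nabla A\|_2^2\ge \tilde\omega\phi$ with $\tilde\omega=\min(\omega,2)$ (using $\phi=\sigma\int_\Omega(N\log N-N+1)\,dx+\frac12\|\nabla A\|_2^2$) produces the desired (\ref{ineqphi}).

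\medskip
\noindent
For part (ii), dropping the nonnegative $c_1\|\Delta A\|_2^2$ term in (\ref{ineqphi}) gives $\phi'+\tilde\omega\phi\le c_2$, whence the integrating factor yields $\phi(t)\le \max(\phi(0),c_2\tilde\omega^{-1})=c_3/2$ for all $t\in(0,T^*)$; since $N\log N-N+1\ge 0$ gives $\phi\ge \frac12\|\nabla A\|_2^2$, estimate (\ref{ineqNablaA}) follows. Integrating (\ref{ineqphi}) over $(s,t)$ and using this uniform $\phi$-bound yields $c_1\int_s^t\|\Delta A(\tau)\|_2^2\,d\tau\le \phi(s)-\phi(t)+c_2(t-s)\le c_3/2+c_2(t-s)\le \max(c_2,c_3/2)(1+t-s)$, which is (\ref{ineqDeltaA}) after dividing by $c_1$.
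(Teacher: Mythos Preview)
Your proof is correct and follows the same overall strategy as the paper---combine the energy identities (\ref{id2}) and (\ref{id3}), then balance the cross terms using Young's inequality, the Poincar\'e-type estimate (\ref{PoincSob0}), and the interpolation Lemma~\ref{grad}---but the detailed organization differs. The paper introduces three free parameters $\eps_1,\eps_2,\eps_3$: it first bounds $\int N|\nabla\vartheta(A)|^2\le \eps_3\|N\|_2^2+(4\eps_3)^{-1}\int|\nabla\vartheta(A)|^4$ and then applies Lemma~\ref{grad} to the $L^4$ term, whereas you bound $\int N|\nabla A|^2\le \|N\|_2\|\nabla A\|_4^2$ directly by Cauchy--Schwarz and then apply Lemma~\ref{grad}. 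You also use the sharper estimate $|A(1-A)|\le A_{\max}(A_{\max}-A_{\min})$ for term~(b), while the paper uses the cruder $|A(1-A)|\le A_{\max}^2$; this gain is offset by your fixed $1/2$--$1/2$ split in the first Young step, so both routes arrive at the same constant $c_1$. One small clarification: the matching condition you describe determines $\alpha$ in terms of $\sigma$, not $\sigma$ itself; with the statement's $\sigma$ fixed, your claimed inequality ``residual $\ge \eta(1-r^2)/2$'' does hold, but its verification also uses $A_{\max}-A_{\min}\le A_{\max}$ before the step where $(r-1)^2\ge 0$ finishes the job. Part~(ii) is identical to the paper's argument.
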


\begin{proof}
Set
\begin{equation}\label{defphi12}
\phi_1(t):= \|\nabla A(t)\|_2^2,\qquad \phi_2(t):= \int_{\Omega}(N\log N-N+1)dx \ge 0
\end{equation}
(due to $s\log s-s+1\ge 0$, $s>0$).
In this proof, $C$ will denote a generic positive constant depending only on 
$K, \chi, A_{\min}, A_{\max}, \psi$, $N_{1,\max}, \eta,
|\Omega|, \mu$
and on $\eps_1,\eps_2,\eps_3$ below.

On the one hand, it follows from (\ref{id2}) that, for any $\eps_1>0$,
\begin{align}
 \frac12\phi_1'+\phi_1+\eta\|\Delta A\|_2^2
 \leq\eps_1\|\Delta A\|_2^2+\frac{\psi^2}{4\eps_1}\int_{\Omega}N^2A^2(1-A)^2dx. 
\label{ineqPhi1}
\end{align}
Since
\begin{equation}
\|N\|_2^2\le \mu^2N_{1,\max}\int_{\Omega}\frac{|\nabla N|^2}{N}dx+C
\label{ineqN2}
\end{equation}
due to Lemma~\ref{leminequa}, we deduce from (\ref{ineqPhi1}) that
\begin{align}
 \frac12\phi_1'+\phi_1
 \le a_1 \int_{\Omega}\frac{|\nabla N|^2}{N}dx + a_2\|\Delta A\|_2^2  + C
  \label{c1}
\end{align}
where 
\begin{equation}
\label{defa12}
a_1=\frac{\psi^2}{4\eps_1} A^4_{\max}  \mu^2N_{1,\max}>0,
\qquad a_2=\eps_1-\eta.
\end{equation}

On the other hand, it follows from (\ref{id3}) that, for any $\eps_2,\eps_3>0$,
\begin{align*}
\phi_2'+\omega\phi_2+\int_{\Omega}\frac{|\nabla N|^2}{N}dx
&\le \int_{\Omega}\nabla N\nabla\vartheta(A)dx
\leq \eps_2\int_{\Omega}\frac{|\nabla N|^2}{N}dx+\frac1{4\eps_2}\int_{\Omega}N|\nabla \vartheta(A)|^2dx\\
&\leq \eps_2\int_{\Omega}\frac{|\nabla N|^2}{N}dx +\frac1{4\eps_2}\Bigl(\eps_3 \|N\|_2^2
+\frac1{4\eps_3}\int_{\Omega}|\nabla \vartheta(A)|^4dx\Bigr).
\end{align*}
Using
$$\int_{\Omega}|\nabla \vartheta(A)|^4dx\le K \chi^4A_{\min}^{-4}(A_{\max}-A_{\min})^2\|\Delta A\|_2^2$$
due to Lemma~\ref{grad}, and (\ref{ineqN2}), we deduce that
\begin{equation}
\phi_2'+\omega\phi_2
\le a_3 \int_{\Omega}\frac{|\nabla N|^2}{N}dx+a_4\|\Delta A\|_2^2+C,
\label{c2}
\end{equation}
where 
\begin{equation}
\label{defa34}
a_3=\eps_2+\frac{\eps_3\mu^2N_{1,\max}}{4\eps_2}-1, \qquad
a_4=\frac{K \chi^4}{16\eps_2\eps_3}\,A_{\min}^{-4}(A_{\max}-A_{\min})^2>0.
\end{equation}

Now setting $\tilde\omega=\min(\omega,2)$ and combining (\ref{c1}) and (\ref{c2}), we 
see that $\phi=\frac12\phi_1+\sigma\phi_2$ satisfies
\begin{equation}\label{ineqphi12}
\phi'+\tilde\omega\phi 
\le (a_1+\sigma a_3)\int_{\Omega}\frac{|\nabla N|^2}{N}dx\\
+\bigl(a_2+\sigma a_4\bigr)\|\Delta A\|_2^2+C.
\end{equation}
Assume $a_3<0$ and choose $\sigma=-a_1/a_3>0$. Then we have $a_2+\sigma a_4<0$ provided
$a_1a_4<a_2a_3$, that is
\begin{equation}\label{conda1234}
K \chi^4A_{\min}^{-4}A^4_{\max}(A_{\max}-A_{\min})^2
\psi^2   \mu^2N_{1,\max} <
16\eps_1(\eta-\eps_1)\bigl(4(1-\eps_2)\eps_2-\eps_3\mu^2N_{1,\max}\bigr)\eps_3.
\end{equation}
The best condition, maximizing the RHS in (\ref{conda1234}), is obtained by choosing $\eps_1=\eta/2$, $\eps_2=1/2$
and then $\eps_3=(2\mu^2N_{1,\max})^{-1}$, which in turn implies $a_3=-1/4<0$. 
Inequality (\ref{conda1234}) is then equivalent to
$$
K\chi^4A_{\min}^{-4}A^4_{\max}(A_{\max}-A_{\min})^2
\psi^2   \mu^2N_{1,\max}<4
\eta^2(1-\eps_3\mu^2N_{1,\max})\eps_3
=\eta^2(\mu^2N_{1,\max})^{-1},
$$
which is true, due to (\ref{condbarmu}). Then we have $a_2=-\eta/2$,
$$
a_1=\frac{\psi^2}{2\eta} A^4_{\max}  \mu^2N_{1,\max},
\quad a_4=\frac{K\chi^4}{4} \mu^2N_{1,\max} \,A_{\min}^{-4}(A_{\max}-A_{\min})^2,\quad
\sigma=\frac{2\psi^2}{\eta} A^4_{\max}  \mu^2N_{1,\max}.
$$
We conclude from (\ref{ineqphi12}) that (\ref{ineqphi}) holds with 
$c_1=-a_2-\sigma a_4$, which yields the value given in (\ref{condbarmu}).

\smallskip
(ii) Multiplying (\ref{ineqphi}) with $e^{\tilde\omega t}$ and integrating between $0$ and $t$, we obtain 
\begin{equation}\label{boundphi}
\phi(t)\le \max(\phi(0),c_2 \tilde\omega^{-1}),\quad 0<t<T^*.
\end{equation}
This guarantees (\ref{ineqNablaA}), in view of (\ref{defphi12}).
Inequality (\ref{ineqDeltaA}) then follows after integrating (\ref{ineqphi}) over $(s,t)$, taking (\ref{boundphi})
into account.
\end{proof}

Building on estimate (\ref{ineqDeltaA}) from the previous lemma, 
we shall now derive uniform estimates for $\|N(t)\|_2$ and $ \|A(t)\|_{H^m(\Omega)}$,
which in turn will guarantee the global existence of the solution.

\begin{lemma}
\label{bounda}
Assume that (\ref{ineqDeltaA}) is satisfied for some $c_4>0$. Then we have
\begin{equation}
\label{L2bound}
\sup_{t\in (0,T^*)} \|N(t)\|_2<\infty
\end{equation}
and, for each $m\in (0,2)$ and $\tau\in (0,T^*)$,
\begin{equation}
\label{Hmbound}
\sup_{t\in (\tau,T^*)} \|A(t)\|_{H^m(\Omega)}<\infty.
\end{equation}
\end{lemma}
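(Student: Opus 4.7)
The plan is to split the proof into two self-contained steps: an energy-based derivation of the $L^2$-bound on $N$, followed by a semigroup argument for the $H^m$-regularity of $A$.

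\emph{Step 1 (bound on $\|N(t)\|_2$).} I would start from the energy identity (\ref{id4}) and write $y(t):=\|N(t)\|_2^2$. The chemotaxis cross-term is first split by Cauchy-Schwarz so as to absorb half of $\|\nabla N\|_2^2$ into the dissipation:
$$\left|2\int_\Omega N\nabla N\cdot\nabla\vartheta(A)\,dx\right|\le \|\nabla N\|_2^2+\int_\Omega N^2|\nabla\vartheta(A)|^2\,dx.$$
Using $|\nabla\vartheta(A)|\le \chi A_{\min}^{-1}|\nabla A|$ (from Lemma~\ref{priori}), Hölder's inequality, Lemma~\ref{grad} to obtain $\|\nabla A\|_4^2\le K^{1/2}(A_{\max}-A_{\min})\|\Delta A\|_2$, and the 2D Gagliardo-Nirenberg inequality $\|N\|_4^2\le C\|\nabla N\|_2\|N\|_2+C\|N\|_1^2$ together with the $L^1$-bound (\ref{ineqN1b}), I would reach
$$\int_\Omega N^2|\nabla\vartheta(A)|^2\,dx\le C\bigl(\|\nabla N\|_2\|N\|_2+1\bigr)\|\Delta A\|_2.$$
Young's inequality then absorbs the remaining $\|\nabla N\|_2^2$ and yields the differential inequality
$$y'(t)+2\omega y(t)\le C y(t)\bigl(1+\|\Delta A\|_2^2\bigr)+C\bigl(1+\|\Delta A\|_2^2\bigr).$$
Gronwall's lemma together with the hypothesis (\ref{ineqDeltaA}) (which gives $\int_0^t(1+\|\Delta A\|_2^2)\,d\tau\le C(1+t)$) produces a finite bound for $\|N(t)\|_2^2$ on any compact subinterval of $(0,T^*)$, yielding (\ref{L2bound}).

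\emph{Step 2 (bound on $\|A(t)\|_{H^m}$).} I would view the first equation of (\ref{original}) as a linear Neumann heat equation $A_t+\mathcal{A}A=f$ with $\mathcal{A}=-\eta\Delta+1$ and forcing $f:=\psi NA(1-A)+\tilde A$. By Lemma~\ref{priori} and Step~1, $\|f(t)\|_2\le C(1+\|N(t)\|_2)$ is bounded on $(0,T^*)$. Duhamel's formula and the smoothing estimate (\ref{SmoothingEstim}) with $p=q=2$ give
$$\|A(t)\|_{H^m}\le Ce^{-t}(1+t^{-(m-1-\beta)_+/2})\|A_0\|_{H^{1+\beta}}+C\int_0^t e^{-(t-s)}\bigl(1+(t-s)^{-m/2}\bigr)\|f(s)\|_2\,ds.$$
For any $\tau>0$ and $t\ge\tau$, the first term is bounded, and the integral is bounded since $(t-s)^{-m/2}$ is integrable at $s=t$ for $m<2$; this yields (\ref{Hmbound}).

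\emph{Main obstacle.} The principal difficulty is the closure of the Gronwall scheme in Step~1. The estimate (\ref{ineqDeltaA}) only controls the time average of $\|\Delta A\|_2^2$, not its pointwise value, so the chemotaxis term must be carefully split to extract both a factor $\|\nabla N\|_2$ (absorbed into the dissipation) and a factor $\|N\|_2$ (subjected to Gronwall). The 2D Gagliardo-Nirenberg inequality, combined with the uniform positive lower bound $A\ge A_{\min}$ from Lemma~\ref{priori} (which tames the logarithmic sensitivity $\vartheta$), is exactly what makes this accounting possible.
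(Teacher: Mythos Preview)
Your Step~2 matches the paper's argument. Step~1, however, has a genuine gap: the Gronwall scheme does not yield a \emph{uniform} bound in~$t$, so (\ref{L2bound}) is not established when $T^*=\infty$. From your differential inequality
\[
y'(t)+2\omega\, y(t)\le C\,y(t)\,\|\Delta A(t)\|_2^2+C\bigl(1+\|\Delta A(t)\|_2^2\bigr),
\]
the effective damping coefficient is $2\omega-C\|\Delta A\|_2^2$, and by~(\ref{ineqDeltaA}) its time average is at best $2\omega-Cc_4$. The constant~$C$ here comes from Sobolev and interpolation inequalities and cannot be made small, so there is no reason this average should be positive; the integrating factor may then grow like $e^{(Cc_4-2\omega)t}$. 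Your own phrasing (``a finite bound \dots\ on any compact subinterval'') already concedes this. The resulting estimate would suffice, via the blow-up alternative, to deduce $T^*=\infty$, but it does not give the uniform bound (\ref{L2bound}), which is precisely what is used afterwards to derive the $L^\infty$ estimate on~$N$ in Theorem~\ref{global}.

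The missing device is to \emph{not} spend all of the dissipation $2\|\nabla N\|_2^2$ on absorbing the cross term, but to keep a fixed portion of it on the left-hand side. Combining this reserve with the 2D Gagliardo--Nirenberg inequality and the $L^1$ bound~(\ref{ineqN1b}) gives $\|\nabla N\|_2^2\ge \eps^{-1}\|N\|_2^2-C(\eps)$ for every $\eps>0$, which injects an additional damping term $\eps^{-1}y$ with coefficient as large as one wishes. Choosing $\eps$ so small that $\eps^{-1}$ dominates the time-averaged coefficient coming from~(\ref{ineqDeltaA}), the integrating factor satisfies $\rho(t)-\rho(s)\ge (t-s)-c$ for a fixed constant~$c$, and integration then yields the genuinely uniform bound $\sup_{t}y(t)<\infty$. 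This is exactly how the paper closes the estimate.
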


\begin{proof}
Let us first establish (\ref{L2bound}).
It follows from (\ref{id4}) that
\begin{align*}
\frac{d}{dt}\|N(t)\|_2^2+2\omega\|N\|_2^2+2\|\nabla N(t)\|_2^2&\leq 2\,\omega N_{1;\max}
+ 2\int_{\Omega}N\nabla N.\nabla\vartheta(A)dxdt\\
&\leq 2\,\omega N_{1;\max}+\int_{\Omega}\nabla N^2.\nabla\vartheta(A)dx\\
&\leq 2\,\omega N_{1;\max}-\int_{\Omega} N^2.\Delta\vartheta(A)dx\\
&\leq 2\,\omega N_{1;\max}+\|N\|_4^2\|\Delta\vartheta(A)\|_2.
\end{align*}
By the Sobolev embedding of $W^{1,1}(\Omega)$ into $L^2(\Omega)$ with constant $C_S$, we have,
for any $\eps_1>0$,
\begin{align*}
 \|N\|_4^2&=\|N^2\|_2\leq C_S(\|\nabla N^2\|_1+\|N^2\|_1)= C_S(2\|N\nabla N\|_1+\|N\|_2^2)\\
&\leq C_S(2\|N\|_2\|\nabla N\|_2+\|N\|_2^2)\leq \eps_1 \|\nabla N\|_2^2+ (C_S+ C_S^2/\eps_1)\|N\|_2^2.
\end{align*}
Choosing $\eps_1= \|\Delta\vartheta(A)\|_2^{-1}$, we get
\begin{align}\label{c5}
\frac{d}{dt}\|N(t)\|_2^2+2\omega\|N\|_2^2+\|\nabla N(t)\|_2^2\leq 
2\,\omega N_{1;\max} + \big(C_S\|\Delta\vartheta(A)\|_2+ C^2_S\|\Delta\vartheta(A)\|^2_2\big)\|N\|_2^2.
\end{align}
Pick now $\eps>0$, and denote by $C(\eps)$ a generic positive constant depending on the solution and on~$\eps$,
 but independent of $t\in (0,T^*)$.
By the Gagliardo-Nirenberg inequality
$$\|N\|_2\leq C_G(\|\nabla N\|_2^{1/2}\|N\|_1^{1/2}+\|N\|_1),$$
we have
$$\|N\|^2_2\leq 2C_G^2(\|\nabla N\|_2\|N\|_1+\|N\|_1^2)\leq \eps \|\nabla N\|_2^2+ C(\eps)N_{1;\max}^2.$$
Therefore, using (\ref{ineqN1b}), we have
$$ \|\nabla N\|_2^2\geq \frac{1}{\eps}\|N\|^2_2-C(\eps).$$
Combining this with (\ref{c5}), we obtain
\begin{align*}
\frac{d}{dt}\|N(t)\|_2^2+\frac{1}{\eps}\|N\|_2^2&\leq C(\eps) + \big(1+2C^2_S\|\Delta\vartheta(A)\|^2_2\big)\|N\|_2^2.
\end{align*}

Now setting
$$\varphi(t)=\|N(t)\|_2^2,
\qquad \rho(t)=\int_{0}^{t}\Bigl(\frac{1}{\eps}-1-2C^2_S\|\Delta\vartheta(A)\|^2_2\Bigr)ds,$$
we are thus left with the differential inequality
$$\varphi'(t)+ \rho'(t)\varphi(t)\leq C(\eps).$$
On the other hand, we have $\Delta\vartheta(A)=\chi(A^{-1}\Delta A-A^{-2}|\nabla A|^2)$,
hence 
$$\|\Delta\vartheta(A)\|^2_2 \le 2\chi^2\bigl(A^{-2}\|\Delta A\|^2_2+A^{-4}\|\nabla A\|_4^4 \bigr).$$
It then follows from (\ref{ineqDeltaA}),  (\ref{ineqAmin}) and Lemma~\ref{grad} that
$$\int_{s}^{t}\|\Delta\vartheta(A)\|^2_2\leq c_5(1+t-s),\quad 0<s<t<T^*.$$
 By choosing $\eps=(2+2C^2_Sc_5)^{-1}$ and letting $c_6=2C^2_Sc_5$, we have
$$\rho(t)-\rho(s) \ge t-s-c_6,\quad 0<s<t<T^*.$$
By integration, we get
$$\varphi(t)
\leq \varphi(0)e^{-\rho(t)}+ C\int_{0}^{t}e^{\rho(s)-\rho(t)}ds 
\leq \varphi(0)e^{c_6-t}+C\int_{0}^{t}e^{c_6+s-t}ds \leq e^{c_6}(\varphi(0)+C)
$$
and (\ref{L2bound}) follows.

\medskip

To prove (\ref{Hmbound}), we rewrite the first equation via the variation-of-constants formula
\begin{align*}
 A(t)=T_{\eta,1}(t)A_0+\int_{0}^{t}T_{\eta,1}(t-s)(\psi NA(1-A)+\tilde{A})(s) ds
\end{align*}
(where $T_{\eta,1}$ is defined at the end of Section~2).
By (\ref{L2bound}), we have 
$$M:=\sup_{t\in (0,T^*)} \|\bigl(\psi NA(1-A)+\tilde{A}\bigr)(t)\|_2<\infty.$$
Fix $m\in(0,2)$. It follows from (\ref{SmoothingEstim}) that, for any $0<\tau\le t<T^*$,
\begin{align*}
 \|A(t)\|_{H^m(\Omega)}&\leq Ce^{-t}(1+t^{-m/2})\|A_0\|_2+C\int_{0}^{t}e^{-(t-s)}
 (1+(t-s)^{-m/2})\|\bigl(\psi NA(1-A)+\tilde{A}\bigr)(s)\|_2 ds\\
&\leq C (1+\tau^{-m/2})\|A_0\|_2+CM\int_{0}^{\infty}e^{-s}(1+s^{-m/2})ds =: C(\tau),
\end{align*}
which proves (\ref{Hmbound}).
\end{proof}

{\it Proof of Theorem~\ref{global}.}
\smallskip

{\bf Step 1.}  {\it Global existence.}
In view of the local theory stated in Section~2, this is a direct consequence of estimates (\ref{L2bound}) 
and (\ref{Hmbound}) in Lemma~\ref{bounda}.
\medskip

{\bf Step 2.}  {\it Boundedness.}
Estimate (\ref{boundAinfty}) was already obtained in Lemma~\ref{bounda}.
Starting from the global estimates obtained in Lemma~\ref{bounda}, 
we shall use  a standard boostrap argument (see e.g., \cite{NSY97}) to prove (\ref{boundNinfty}).

As a consequence of (\ref{Hmbound}) and Sobolev imbeddings,
we have, for each $\tau>0$,
\begin{equation}
\label{NablaApbound}
\sup_{t\in (\tau,\infty)} \|\nabla A(t)\|_p<\infty,\quad 1\le p<\infty.
\end{equation}
Using (\ref{ineqAmin}) and (\ref{L2bound}), it follows that
\begin{equation}
\label{Mqbound}
M_q:=\sup_{t\in (\tau,\infty)} \|(NA^{-1}\nabla A)(t)\|_q<\infty,\quad 1\le q<2.
\end{equation}
Set $T(t)=T_{1,\omega}(t)$. From the second equation, using $T(t)1=e^{-\omega t}$, we have 
\begin{align*}
N(t)
&=T(t)N_0+\int_{0}^{t}T(t-s)\big(-\nabla(N\nabla\vartheta(A))\big)ds +\int_{0}^{t}T(t-s)\omega ds\\
&=T(t)N_0+1-e^{-\omega t}+\int_{0}^{t} \nabla T(t-s) \cdot NA^{-1}\nabla A ds.
\end{align*}
After a time-shift,
combining this with (\ref{SmoothingEstim}), (\ref{L2bound}) and (\ref{Mqbound}),  
we obtain, for any $q\in [1,2)$, $m\in [0,1)$ and $t\ge \tau/2$,
\begin{align}
 \|N(\textstyle\frac{\tau}{2}+t)\|_{W^{m,q}(\Omega)}
&\leq Ce^{-t}(1+t^{-m/2})\|N(\textstyle\frac{\tau}{2})\|_q+C \notag \\
&\qquad  +C\int_{0}^{t}e^{-(t-s)} \bigl(1+(t-s)^{-\frac{m+1}{2}}\bigr) \|\bigl(NA^{-1}\nabla A\bigr)
(\textstyle\frac{\tau}{2}+s)\|_q ds \label{varconstN} \\
&\leq C(1+\tau^{-m/2})+CM_q\int_{0}^{\infty}e^{-s} \bigl(1+s^{-(m+1)/2}\bigr)ds =: \tilde C(\tau). \notag
\end{align}
Using Sobolev's imbedding again, taking $q$ close to $2^-$ and $m$ close to $1^-$, we deduce
$$
\sup_{t\in (\tau,\infty)} \|N(t)\|_p<\infty,\quad 1\le p<\infty.
$$
Due to (\ref{NablaApbound}), this implies that (\ref{Mqbound}) is true for any $q\in [1,\infty)$.
The argument in (\ref{varconstN}) then yields 
$$
\sup_{t\in (\tau,\infty)} \|N(t)\|_{W^{m,q}(\Omega)}<\infty,\quad 0<m<1,\ 1\le q<\infty
$$
and (\ref{boundNinfty}) follows by a further application of Sobolev imbeddings.
\hfill $ \Box $

\section{The case of a square domain: Proof of Theorem~\ref{global2}}
Let us consider the square $\Omega=(0, L)^2$.
Theorem~\ref{global2} is a consequence of Theorem~\ref{global} and of the following two lemmas,
where we estimate the constants $K$ and $\mu$ in inequalities (\ref{EllEstim}) and (\ref{PoincSob})

\begin{lemma}\label{const2}
For $\Omega=(0, L)^2$, the constant $K$ in estimate (\ref{EllEstim}) satisfies $K\leq 12$.
\end{lemma}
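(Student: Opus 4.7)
My plan is to retrace the proof of Lemma~\ref{grad} while carefully tracking the numerical constants afforded by the square geometry and the Neumann boundary conditions. The key improvements over the general argument are two: first, the elliptic regularity constant in~(\ref{elliptic}) can be taken equal to $1$ on a square; second, the Frobenius norm of $M=2D^2u+(\Delta u)I$ integrates to exactly $10\|\Delta u\|_2^2$, thanks to a specific integration-by-parts cancellation peculiar to the square.

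The first step establishes the identity
\[ \int_{\Omega} u_{xx}\,u_{yy}\,dx\,dy = \int_{\Omega} u_{xy}^2\,dx\,dy \]
for all $u\in H^2(\Omega)$ with $\partial u/\partial\nu=0$. Two successive integrations by parts in $x$ then $y$ give the result: the first produces boundary terms on the vertical sides $\{x=0,L\}$ where $u_x\equiv 0$; the second produces boundary terms on the horizontal sides where $u_{xy}$ vanishes (indeed, tangentially differentiating the identity $u_y\equiv 0$ on horizontal sides yields $u_{xy}=\partial_x u_y=0$ there). All boundary contributions vanish, and one obtains in particular $\int_\Omega(\Delta u)^2 = \int_\Omega |D^2 u|_F^2$, so $C(\Omega)=1$ in~(\ref{elliptic}).

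A direct algebraic expansion gives $|M|_F^2 = (3u_{xx}+u_{yy})^2 + (u_{xx}+3u_{yy})^2 + 8u_{xy}^2 = 10(u_{xx}^2+u_{yy}^2)+12\,u_{xx}u_{yy}+8u_{xy}^2$. Integrating and invoking the identity above, I get $\int_\Omega|M|_F^2 = 10\int_\Omega(\Delta u)^2$. Since $M$ is symmetric, the pointwise bound in the proof of Lemma~\ref{grad} can be sharpened to $|v^{T}Mv|\le \|M\|_{2\to 2}|v|^2\le |M|_F|v|^2$ for any vector $v$. Applying this with $v=\nabla u$ and repeating the integration-by-parts and Young-inequality argument of Lemma~\ref{grad} yields
\[ \int_\Omega|\nabla u|^4\,dx \le \|u\|_\infty^2\int_\Omega |M|_F^2\,dx = 10\,\mathrm{osc}^2(u)\int_\Omega (\Delta u)^2\,dx, \]
hence $K\le 10\le 12$.

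The main subtlety is the handling of boundary terms in the identity of the first step, because the square is not a $C^2$ domain and the outward normal is undefined at the corners. The clean resolution is to verify the identity first on the cosine basis $\{\cos(k\pi x/L)\cos(l\pi y/L)\}_{k,l\ge 0}$ of $L^2(\Omega)$---where it reduces to a trivial coefficientwise equality using orthogonality---and then extend by density to all $u\in H^2(\Omega)$ satisfying the Neumann condition. Once this identity is in hand, the remaining steps are routine bookkeeping.
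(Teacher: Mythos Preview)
Your proof is correct and follows essentially the same route as the paper: both arguments recycle the divergence identity from Lemma~\ref{grad}, bound the quadratic form ${}^t\nabla u\,M\,\nabla u$ via a matrix norm of $M=2D^2u+(\Delta u)I$, and then exploit the cosine Fourier basis on the square to convert everything into $\|\Delta u\|_2^2$. The only substantive difference is your choice of norm: the paper uses the max-row-sum norm $|M|_1$ and the pointwise bound $|M|_1^2\le 12(u_{xx}^2+u_{yy}^2+2u_{xy}^2)$, whereas you use the Frobenius norm together with the exact integral identity $\int_\Omega|M|_F^2=10\|\Delta u\|_2^2$; this yields the slightly sharper constant $K\le 10$ instead of $12$, but the structure of the argument is the same.
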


\begin{proof}
Recall that the constant $K=K(\Omega)$ is determined from inequality (\ref{nabla4b}) in the proof of Lemma~\ref{grad}.
Repeatedly using the Cauchy-Schwarz inequality, we first compute
\begin{align*}
\bigl|2D^2 u+(\Delta u)I\bigr|_1
&=\max \bigl\{  |3u_{xx}+u_{yy}|+2|u_{xy}|, |u_{xx}+3u_{yy}|+2|u_{xy}|\bigr\}\\
&=2|u_{xy}|+\max \bigl\{  |3u_{xx}+u_{yy}|, |u_{xx}+3u_{yy}||\bigr\} \\
&\le2|u_{xy}|+ \bigl( 10(u_{xx}^2+u_{yy}^2)\bigr)^{1/2}
\end{align*}
and then
$$\bigl|2D^2 u+(\Delta u)I\bigr|_1^2 \le 12\bigl(u_{xx}^2+u_{yy}^2+2u_{xy}^2\bigr).$$
Consequently,
\begin{equation}
\label{Applic1}
\int_{\Omega}\bigl|2D^2 u+(\Delta u)I\bigr|_1^2 dx\leq 12 \bigl(\|u_{xx}\|_2^2+\|u_{yy}\|_2^2+2\|u_{xy}\|_2^2\bigr).
\end{equation}

We next claim that
\begin{equation}
\label{Applic2}
\|u_{xx}\|_2^2+\|u_{yy}\|_2^2+2\|u_{xy}\|_2^2 = \|\Delta u\|^2_2.
\end{equation}
Indeed, using the Fourier expansion $u=\sum_{j,k\geq 0}a_{jk}\cos\frac{j\pi x}{L}\cos\frac{k\pi y}{L}$, we have
\begin{align*}
 \|u_{xx}\|_2^2+\|u_{yy}\|_2^2&=
 \frac{\pi^4}{2L^2}\sum_{j\geq 1}j^4a_{j0}^2+
 \frac{\pi^4}{2L^2}\sum_{k\geq 1}k^4a_{0k}^2+
 \frac{\pi^4}{4L^2}\sum_{j,k\geq 1}(j^4+k^4)a_{jk}^2,\\
\|u_{xy}\|_2^2&=
 \frac{\pi^4}{4L^2}\sum_{j,k\geq 1}j^2k^2 a_{jk}^2,\\
\|u_{xx}+u_{yy}\|_2^2&=
 \frac{\pi^4}{2L^2}\sum_{j\geq 1}j^4a_{j0}^2+
 \frac{\pi^4}{2L^2}\sum_{k\geq 1}k^4a_{0k}^2+
 \frac{\pi^4}{4L^2}\sum_{j,k\geq 1}(j^2+k^2)^2a_{jk}^2,
\end{align*}
which implies (\ref{Applic2}).

The conclusion follows by combining  (\ref{nabla4b}), (\ref{Applic1}) and (\ref{Applic2}).
\end{proof}

\begin{lemma}\label{const1}
For $\Omega=(0, L)^2$, the constant $\mu$ in estimate (\ref{PoincSob}) satisfies 
$\mu\leq \sqrt{ \frac{3}{2}}$.
\end{lemma}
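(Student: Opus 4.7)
My plan is to decompose $u$ using its slice-averages and apply one-dimensional Sobolev--Poincar\'e estimates. Setting $\overline{u} = |\Omega|^{-1}\int_\Omega u$, I would write
$$u - \overline{u} = \phi(x) + \psi(y) + r(x,y),$$
where $\phi(x) = L^{-1}\int_0^L u(x,y)\,dy - \overline{u}$, $\psi(y) = L^{-1}\int_0^L u(x,y)\,dx - \overline{u}$, and $r = u - \overline{u} - \phi - \psi$. One checks directly that $\int_0^L \phi = \int_0^L \psi = 0$ and that $r$ has mean zero on every horizontal and every vertical slice. By Fubini, the three pieces are mutually orthogonal in $L^2(\Omega)$, so
$$\|u - \overline{u}\|_{L^2(\Omega)}^2 = L\|\phi\|_{L^2(0,L)}^2 + L\|\psi\|_{L^2(0,L)}^2 + \|r\|_{L^2(\Omega)}^2.$$

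For the one-dimensional parts $\phi$ and $\psi$, the key input is the sharp 1D Sobolev--Poincar\'e inequality
$$\|f\|_{L^2(0,L)} \le \tfrac{\sqrt L}{2}\|f'\|_{L^1(0,L)} \qquad \text{for } f \in W^{1,1}(0,L) \text{ with } \int_0^L f = 0,$$
whose optimal constant $\sqrt{L}/2$ is saturated in the $BV$ sense by the step function $\operatorname{sgn}(x - L/2)$ (and follows by a density argument). Combining this with Jensen's bound $|\phi'(x)| \le L^{-1}\int_0^L |u_x(x,y)|\,dy$ yields $L\|\phi\|_{L^2(0,L)}^2 \le \tfrac14 \|u_x\|_{L^1(\Omega)}^2$, and likewise $L\|\psi\|_{L^2(0,L)}^2 \le \tfrac14 \|u_y\|_{L^1(\Omega)}^2$.

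For the residual $r$, the double mean-zero property on slices guarantees the existence of points $y_0(x)$ and $x_0(y)$ where $r$ vanishes; the fundamental theorem of calculus then gives the two pointwise bounds
$$|r(x,y)| \le \int_0^L |r_y(x,t)|\,dt, \qquad |r(x,y)| \le \int_0^L |r_x(s,y)|\,ds.$$
Multiplying them and integrating on $\Omega$ in the Gagliardo--Nirenberg spirit produces
$$\|r\|_{L^2(\Omega)}^2 \le \|r_x\|_{L^1(\Omega)}\|r_y\|_{L^1(\Omega)}.$$
One then relates $\|r_x\|_{L^1}$ and $\|r_y\|_{L^1}$ back to $\|u_x\|_{L^1}$ and $\|u_y\|_{L^1}$ using the identities $r_x = u_x - \phi'$, $r_y = u_y - \psi'$ together with the Jensen estimates $\|\phi'\|_{L^1(\Omega)} \le \|u_x\|_{L^1}$, $\|\psi'\|_{L^1(\Omega)} \le \|u_y\|_{L^1}$.

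The closing step is to combine the three estimates and invoke the Minkowski integral inequality $\sqrt{\|u_x\|_1^2 + \|u_y\|_1^2} \le \|\nabla u\|_1$ to pass from the coordinate $L^1$ norms to the vectorial gradient norm, ultimately extracting $\|u - \overline{u}\|_2^2 \le \tfrac32 \|\nabla u\|_1^2$. I expect the main obstacle to lie precisely in this last algebraic combination: the naive triangle bound $\|r_x\|_{L^1} \le 2\|u_x\|_{L^1}$ alone only yields $\mu \le 3/2$, so obtaining the sharper $\mu \le \sqrt{3/2}$ demands a finer accounting --- for instance exploiting the $L^2$ Pythagorean identity $\|u_x\|_2^2 = \|r_x\|_2^2 + \|\phi'\|_{L^2(\Omega)}^2$, or arguing that the configurations saturating $\|r_x\|_{L^1} = 2\|u_x\|_{L^1}$ force $\phi'$ (and thus the $\phi$-contribution to $\|u - \overline{u}\|_2^2$) to be simultaneously small, so that the worst cases of the three pieces cannot occur in tandem.
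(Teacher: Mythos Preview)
Your proposal has a genuine gap. You yourself identify it: the orthogonal decomposition and the slice estimates cleanly yield
\[
\|u-\overline u\|_2^2 \;\le\; \tfrac14\|u_x\|_1^2+\tfrac14\|u_y\|_1^2+\|r_x\|_1\|r_y\|_1,
\]
but the only control you offer on $\|r_x\|_1,\|r_y\|_1$ is the triangle bound $\|r_x\|_1\le 2\|u_x\|_1$, which gives $\mu\le 3/2$, not $\sqrt{3/2}$. The closing paragraph does not supply the ``finer accounting''; it only lists heuristics (an $L^2$ Pythagorean identity for $u_x$, or a non-coexistence argument for the worst cases) without carrying either to a conclusion. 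Neither of those ideas straightforwardly upgrades the $L^1$ product bound $\|r_x\|_1\|r_y\|_1$, and in fact the worst case $p:=\|\phi'\|_{L^1(\Omega)}=\|u_x\|_1$, $q:=\|\psi'\|_{L^1(\Omega)}=\|u_y\|_1$ is not excluded by anything you have written, so as it stands the argument stalls at $\mu\le 3/2$.

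The paper's proof takes a different route and avoids this obstruction entirely. Instead of an orthogonal decomposition, it writes $u(x,y)$ in two ways as a one-variable slice average plus an integral of a partial derivative (via the mean value theorem), multiplies the two representations, and integrates. This produces
\[
\|u\|_2^2 \;\le\; \tfrac{1}{L}\,\|u\|_1\bigl(\|u_x\|_1+\|u_y\|_1\bigr)+\|u_x\|_1\|u_y\|_1,
\]
for zero-mean $u$. The decisive step is then to invoke the sharp $L^1$ Poincar\'e inequality on convex domains due to Acosta--Dur\'an, $\|u\|_1\le (L/\sqrt{2})\|\nabla u\|_1$, together with $\|u_x\|_1+\|u_y\|_1\le \sqrt{2}\,\|\nabla u\|_1$; these combine to give exactly $\|u\|_2^2\le \tfrac32\|\nabla u\|_1^2$. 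In other words, the constant $\sqrt{3/2}$ is obtained not by internal cancellation in a decomposition but by importing an optimal external inequality. If you want to salvage your approach, you would need an analogous sharp input to replace the crude bound on $\|r_x\|_1\|r_y\|_1$; absent that, the paper's product-representation argument is both shorter and complete.
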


\begin{proof}
Let $u$ be a smooth function with zero average. There exist mean values $\zeta(x), \xi(y)$ in $[0, L]$ such that
$$\frac{1}{L}\int_{0}^{L}u(s,y)ds=u(\xi(y),y), \quad \frac{1}{L}\int_{0}^{L}u(x,t)dt=u(x,\zeta(x)).$$
It follows that
\begin{align*}
 u(x,y)&=\frac{1}{L}\int_{0}^{L}u(s,y)ds+\int_{\xi(y)}^{x}u_x(s,y)ds\\
&=\frac{1}{L}\int_{0}^{L}u(x,t)dt+\int_{\zeta(x)}^{y}u_y(x,t)dt.
\end{align*}
Consequently, 
\begin{align*}
\int_{\Omega}u^2(x,y)dxdy=&\frac{1}{L^2}\bigg(\int_{\Omega}u(x,y)dxdy\bigg)^2\\
 &+\int_{\Omega}\bigg(\frac{1}{L}\int_{0}^{L}u(s,y)ds\int_{\zeta(x)}^{y}u_y(x,t)dt \bigg)dxdy\\
&+\int_{\Omega}\bigg(\frac{1}{L}\int_{0}^{L}u(x,t)dt \int_{\xi(y)}^{x}u_x(s,y)ds \bigg)dxdy\\
&+\int_{\Omega}\bigg(\int_{\xi(y)}^{x}u_x(s,y)ds \int_{\zeta(x)}^{y}u_y(x,t)dt \bigg)dxdy.
\end{align*}
Since $u$ has zero average, it follows that
$$
\|u\|_2^2\leq \frac{1}{L}\|u\|_1(\|u_x\|_1+ \|u_y\|_1)+ \|u_x\|_1\|u_y\|_1.
$$
Moreover, by \cite[Theorem 3.2]{AD04}, we have $\|u\|_1\le (L/\sqrt{2})\|\nabla u\|_1$, hence
$$
\|u\|_2^2\leq \frac{1}{\sqrt 2}\|\nabla u\|_1(\|u_x\|_1+ \|u_y\|_1)+ \frac{1}{4}(\|u_x\|_1+\|u_y\|_1)^2.
$$
Using the elementary inequality $\|u_x\|_1+ \|u_y\|_1\le \sqrt 2\|\nabla u\|_1$, we deduce that
\begin{align*}
\|u\|_2\leq \sqrt{\frac{3}{2}}\|\nabla u\|_1.
\end{align*}
Since $C^1(\bar\Omega)$ is dense in $W^{1,1}(\Omega)$, Lemma~\ref{const1} follows by 
letting $u=N-|\Omega|^{-1}\int_{\Omega}N(x,y)dxdy$.
\end{proof}

\begin{remark}\label{remrem}
Let us briefly justify the generalization in Remark 1.2. 
Conditions $({\mathcal H}_1), ({\mathcal H}_2), ({\mathcal H}_5)$ are natural.
By condition $({\mathcal H}_3)$ and $N^{1-\delta}\leq \varepsilon N+ 
C(\varepsilon)$, one can show that $\|N(t)\|_1$ is uniformly bounded. And then, one has following estimates
$$|g(A,N) \log N|\leq C_1(A) N+C_2(A) \eqno(a) $$
$$|g(A,N)|N\leq \varepsilon N^2+ C_3(\varepsilon, A)\eqno(b)$$
Inequality $(a)$ and boundedness of $\|N(t)\|_1$ imply (\ref{c2}). Inequality $(b)$ implies (\ref{c5}).
By condition  $({\mathcal H}_4)$, one has 
$$|f(A,N)|^2\leq C_4(A)N^2+ C_5(A)$$
which implies  inequality (\ref{c1})
\end{remark}

\section{The case $\chi\le 1$: Proof of Theorem~\ref{global3}}

For $\chi\le 1$, a basic entropy estimate is given by the following.

\begin{lemma}\label{prioriNlog}
Assume $0<\chi\le 1$ and $A_{\rm max}=1$. Then
\begin{equation}
\label{boundNlogN}
\sup_{0<t<T^*} \int_\Omega N|\log N|\,dx<\infty.
\end{equation}
\end{lemma}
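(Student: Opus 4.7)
My plan is to reduce the bound on $\int N|\log N|$ to a bound on the standard entropy $H(t):=\int_\Omega(N\log N-N+1)\,dx$ (using the elementary inequality $s|\log s|\le s\log s+2/e$ for $s>0$ together with the $L^1$ control of $N$ from Lemma~\ref{priori}), and then to control $H$ through a \emph{modified entropy}
$$\Psi(t):=H(t)-\alpha\int_\Omega N\log A\,dx=H(t)+\alpha\int_\Omega N|\log A|\,dx,$$
for a suitable $\alpha>0$. Because the hypotheses of Theorem~\ref{global3} imply $A_{\max}=1$, Lemma~\ref{priori} yields $A\in[A_{\min},1]$; hence the added term is non-negative and satisfies $0\le \alpha\int N|\log A|\le \alpha|\log A_{\min}|\,N_{1,\max}$. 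So $\Psi$ and $H$ are comparable up to an additive constant and it suffices to prove a uniform upper bound for $\Psi$.

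The first substantive step is to compute $\tfrac{d}{dt}\int N\log A$. Splitting as $\int(\log A)\partial_tN+\int N\,\partial_t\log A$, the first piece is handled via the equation for $N$ and integration by parts, noting that the Neumann condition $\partial_\nu A=0$ forces $\partial_\nu\log A=0$, which in turn forces $\partial_\nu N=0$, so no boundary terms appear; the second piece is obtained by substituting $\partial_t\log A$ from the $A$-equation and integrating by parts in $\int N\,\Delta A/A$. A direct computation gives
$$\frac{d}{dt}\int N\log A=-(1+\eta)\int\nabla N\cdot\nabla\log A+(\chi+\eta)\int N|\nabla\log A|^2+R_0(t),$$
where $R_0(t)=\omega\int(1-N)\log A+\psi\int N^2(1-A)+\tilde A\int N/A-\int N$.

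Combining this identity with the energy inequality~(\ref{id3}) (dropping the non-positive $\omega\int(\log N-N+1)$ contribution) and applying Young's inequality to absorb the resulting cross term $[\chi+\alpha(1+\eta)]\int\nabla N\cdot\nabla\log A$ into $\int|\nabla N|^2/N$, I obtain
$$\Psi'+\omega H\le\Bigl\{\tfrac{1}{4}[\chi+\alpha(1+\eta)]^2-\alpha(\chi+\eta)\Bigr\}\int N|\nabla\log A|^2-\alpha R_0(t).$$
The algebraic heart of the proof is then to note that the coefficient in braces is non-positive exactly when the quadratic $\alpha^2(1+\eta)^2-2\alpha[\chi(1-\eta)+2\eta]+\chi^2$ has real roots, i.e. when its discriminant $16\eta(1-\chi)(\chi+\eta)$ is non-negative, which happens \emph{precisely} when $\chi\le 1$. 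For such $\chi$, choose $\alpha$ between the two real roots to kill the $\int N|\nabla\log A|^2$ term entirely.

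To close the argument, it remains to bound $-\alpha R_0(t)$, and this is where the constraint $A_{\max}=1$ becomes crucial: $1-A\ge 0$ makes $-\alpha\psi\int N^2(1-A)\le 0$, while $-\alpha\tilde A\int N/A\le 0$ trivially; the leftover terms $\alpha\int N$ and $-\alpha\omega\int(1-N)\log A$ are uniformly controlled by $\|N\|_1\le N_{1,\max}$ (Lemma~\ref{priori}) and by $|\log A|\le|\log A_{\min}|$. Hence $\Psi'+\omega H\le C$ for a constant depending only on the data, and combining with $H\ge\Psi-\alpha|\log A_{\min}|N_{1,\max}$ yields $\Psi'+\omega\Psi\le C'$. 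A direct Gronwall argument then gives a uniform-in-time bound on $\Psi$, hence on $H$, hence on $\int N|\log N|$. The main obstacle is the algebraic step of locating the admissible $\alpha$, for which the discriminant computation pinpoints $\chi\le 1$ as the exact threshold; everything else is routine bookkeeping exploiting the sign properties granted by $A\le 1$.
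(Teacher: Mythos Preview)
Your proof is correct and follows essentially the same approach as the paper: the paper's functional $Y(t)=\int_\Omega N(\log N-c\log A)\,dx$ differs from your $\Psi$ only by the bounded quantity $|\Omega|-\int_\Omega N$, the discriminant condition $(1+\eta)^2c^2-2c(\chi+2\eta-\chi\eta)+\chi^2\le 0$ is identical to yours, and the use of $A\le 1$ to control the reaction terms (the paper writes this as $F\ge -A$) is the same idea as your separate treatment of $-\alpha\psi\int N^2(1-A)$ and $-\alpha\tilde A\int N/A$. The only cosmetic difference is that the paper computes $Y'$ directly, whereas you split off the contribution of~(\ref{id3}) first; the resulting Gronwall inequality is the same.
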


\begin{proof}
Following~\cite[Section~4]{Bi2}, we define
$$Y(t)=\int_\Omega N(\log N-c\log A)\,dx.$$
Letting $F= \psi N A(1-A)+ \tilde{A}-A$ and $G=\omega(1-N)$, we have
\begin{align}
Y'(t)&=\int_\Omega N_t(\log N-c\log A)\,dx+\int_\Omega (N_t-cNA^{-1}A_t)\,dx \notag \\
&=Q+\int_\Omega (\log N-c\log A)G\,dx+\int_\Omega G\,dx-c\int_\Omega NA^{-1}F\,dx, \label{Yprime}
\end{align}
where
\begin{align*}
Q&=\int_\Omega \Bigl[\Delta N-\nabla\cdot(N\nabla\vartheta(A))\Bigr](\log N-c\log A)\,dx
-c\eta\int_\Omega NA^{-1}\Delta A\,dx \\
&=-\int_\Omega N^{-1}|\nabla N|^2\, dx+(\chi+c(1+\eta))\int_\Omega A^{-1}\nabla N\cdot\nabla A
-c(\chi+\eta)\int_\Omega NA^{-2}|\nabla A|^2\, dx \\
&=-\int_\Omega N^{-1}A^{-2}\Bigl[A^2|\nabla N|^2+c(\chi+\eta)N^2|\nabla A|^2-(\chi+c(1+\eta))NA\nabla N\cdot\nabla A
\Bigr]\, dx.
\end{align*}
We see that $Q\le 0$ provided the discriminant $\delta:=(\chi+c(1+\eta))^2-4c(\chi+\eta)$ is nonpositive,
that is 
\begin{align}\label{cc}
(1+\eta)^2c^2-2c(\chi+2\eta-\chi\eta)+\chi^2\leq 0.
 \end{align}
There exists a constant $c$ satisfying (\ref{cc}) if and only if 
$$|\chi+2\eta-\chi\eta|\geq (1+\eta)\chi,$$
which is equivalent to  $\chi\le 1$.

On the other hand, we have $G\le \omega$ and, since $A\le 1$ due to (\ref{ineqAmax}), $F\ge -A$. 
Using (\ref{ineqAmin}), (\ref{ineqN1b}) and the inequality $\log s\le s$ ($s>0$),
we deduce from (\ref{Yprime}) that
$$Y'(t)\le -\omega Y+\omega \int_\Omega (\log N-c\log A)\,dx+\omega|\Omega|+c\int_\Omega N\,dx\le -\omega Y+C.$$
We infer that $\sup_{0<t<T^*}Y(t)<\infty$, so that (\ref{boundNlogN}) follows from (\ref{ineqAmax}), (\ref{ineqN1b}).
\end{proof}

With the bound (\ref{prioriNlog}) at hand, we can now use the following better Sobolev type inequality
in place of Lemma~\ref{leminequa}.

\begin{lemma}\label{leminequa2}
For any $\eps>0$, there holds
\begin{align}\label{PoincSobLog}
\|u\|_2^2\leq \eps\int_{\Omega}u|\log u|\,dx\ \int_{\Omega}\frac{|\nabla u|^2}{u}dx+C(\Omega,\|u\|_1,\eps)
\end{align}
for any uniformly positive function $u\in H^1(\Omega)$.

\end{lemma}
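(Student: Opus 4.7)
The plan is to establish the inequality by a truncation argument, exploiting the two-dimensional Sobolev embedding $W^{1,1}(\Omega)\hookrightarrow L^2(\Omega)$ together with the logarithmic gain obtained by restricting to the super-level set $\{u>M\}$. The key observation will be that on $\{u>M\}$ one has $u\le u|\log u|/\log M$, which produces a factor $(\log M)^{-1}$; this factor will be made as small as desired by choosing $M$ large (depending on $\eps$ and $\|u\|_1$).

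First I would fix $M\ge e$ (to be chosen at the end) and decompose
\begin{equation*}
\int_{\Omega} u^2\,dx \le M\|u\|_1 + \int_{\{u>M\}} u^2\,dx \le 3M\|u\|_1 + 2\|(u-M)_+\|_2^2,
\end{equation*}
using $u^2\le 2M^2+2(u-M)^2$ on $\{u>M\}$ together with the Chebyshev bound $M^2|\{u>M\}|\le M\|u\|_1$. Then I would invoke the two-dimensional embedding $\|(u-M)_+\|_2\le C_\Omega(\|\nabla(u-M)_+\|_1+\|(u-M)_+\|_1)$, bound the $L^1$ term by H\"older, $\|(u-M)_+\|_1\le |\{u>M\}|^{1/2}\|(u-M)_+\|_2\le (\|u\|_1/M)^{1/2}\|(u-M)_+\|_2$, and require $M\ge 4C_\Omega^2\|u\|_1$ in order to absorb this last contribution on the left-hand side, obtaining the purely ``gradient'' bound
\begin{equation*}
\|(u-M)_+\|_2\le 2C_\Omega\|\nabla(u-M)_+\|_1.
\end{equation*}

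The main estimate then comes from Cauchy--Schwarz combined with the logarithmic gain: since $u\le u|\log u|/\log M$ on $\{u>M\}$, one has
\begin{equation*}
\|\nabla(u-M)_+\|_1^2 \le \Bigl(\int_{\{u>M\}} u\,dx\Bigr)\Bigl(\int_{\Omega}\frac{|\nabla u|^2}{u}\,dx\Bigr) \le \frac{1}{\log M}\int_{\Omega} u|\log u|\,dx\int_{\Omega}\frac{|\nabla u|^2}{u}\,dx.
\end{equation*}
Combining everything yields
\begin{equation*}
\|u\|_2^2 \le 3M\|u\|_1 + \frac{8C_\Omega^2}{\log M}\int_{\Omega} u|\log u|\,dx\int_{\Omega}\frac{|\nabla u|^2}{u}\,dx.
\end{equation*}

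Finally I would choose $M=\max\bigl(4C_\Omega^2\|u\|_1,\,e^{8C_\Omega^2/\eps},\,e\bigr)$, so that $8C_\Omega^2/\log M\le\eps$; this gives the desired inequality with $C(\Omega,\|u\|_1,\eps)=3M\|u\|_1$. The step requiring the most care is the H\"older absorption that upgrades the Sobolev embedding to the gradient-only bound $\|(u-M)_+\|_2\le 2C_\Omega\|\nabla(u-M)_+\|_1$: it crucially relies on $(u-M)_+$ having small support, which is ensured by taking $M$ large relative to $\|u\|_1$. Without this absorption one is stuck with a residual term of the form $(\log M)^{-2}\bigl(\int u|\log u|\bigr)^2$ which cannot be controlled purely in terms of $\|u\|_1$ and $\eps$.
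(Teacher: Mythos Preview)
Your proof is correct and follows essentially the same route as the paper's: truncation at a large level $M$ (the paper uses $k$), the two-dimensional embedding $W^{1,1}\hookrightarrow L^2$ applied to $(u-M)_+$, Cauchy--Schwarz on $\|\nabla (u-M)_+\|_1$, and the logarithmic gain $u\le u|\log u|/\log M$ on $\{u>M\}$. The one difference is that the paper uses the Poincar\'e form of the embedding, so the residual term is simply $|\Omega|^{-1}\|(u-k)_+\|_1^2\le |\Omega|^{-1}\|u\|_1^2$, already a constant depending only on $\|u\|_1$; no absorption step is required, and the choice $k=\exp(\mu^2/\eps)$ finishes immediately. Your closing remark that one would otherwise be ``stuck'' is therefore overstated: even with the full Sobolev embedding, the crude bound $\|(u-M)_+\|_1\le\|u\|_1$ already controls the residual, making the H\"older--absorption detour (and hence the dependence of $M$ on $\|u\|_1$) unnecessary.
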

\begin{proof}
Fix $k>1$. Since $\nabla[u-k]_+=\chi_{\{u>k\}}\nabla u$, we deduce from (\ref{PoincSob2}) that
$$
\int_\Omega [u-k ]_+^2 dx 
\leq  \mu^2 \left( \int_{u>k}  | \nabla u  | dx \right)^2 
+  |\Omega|^{-1}\left( \int_\Omega [u-k ]_+ dx \right)^2.$$
On the other hand, we have $u^2\le [u-k]_+^2 + 2ku$
(just consider separately the cases $u \le$ or $> k$).
Therefore, by integration, we obtain
\begin{equation}
\label{eq:control-ulogu}
\begin{array}{rcl} 
{ \displaystyle{ \int_\Omega u^2 dx }}
& { \leq }& \displaystyle{  \mu^2 \left( \int_{u>k} | \nabla u | dx \right)^2 
+   |\Omega|^{-1} \|u\|_1^2  + 2k\|u\|_1 } \vspace{5pt} \\
& \leq & \displaystyle{  \mu^2 \int_{u>k} \frac{ | \nabla u |^2 }{ u } dx \cdot 
\int_{u>k} u  dx + |\Omega|^{-1} \|u\|_1^2 + 2k\|u\|_1 } \vspace{5pt} \\
& \leq & \displaystyle{  \mu^2 \int_\Omega \frac{ | \nabla u |^2 }{ u } dx \cdot 
\frac{ \displaystyle{ \int_\Omega u|\log u|dx}} {\log k} 
+ |\Omega|^{-1}  \|u\|_1^2 + 2k\|u\|_1} 
\end{array}
\end{equation}
and the Lemma follows by taking $k=\exp(\mu^2/\eps)$.
\end{proof}

{\it Proof of Theorem~\ref{global3}.}
We claim that
\begin{equation}\label{ineqNablaA2}
\sup_{t\in (0,T^*)}\|\nabla A(t)\|_2^2 <\infty
\end{equation}
and
\begin{equation}
\label{ineqDeltaA2}
\int_s^t\|\Delta A(\tau)\|_2^2\,d\tau\leq C(1+t-s),\quad 0<s<t<T^*.
\end{equation}

Fix $\eps>0$. By Lemmas~\ref{prioriNlog} and \ref{leminequa2}, we have
\begin{equation}
\|N\|_2^2\le \eps \int_{\Omega}\frac{|\nabla N|^2}{N}dx+C(\eps),\quad 0<t<T^*.
\label{ineqN2b}
\end{equation}
Using estimate (\ref{ineqN2b}) instead of (\ref{PoincSob0}) in the proof of Lemma~\ref{lembound}(i), 
we obtain the differential inequality~(\ref{ineqphi12})
where the constants $a_i$ are defined by formulae (\ref{defa12}) and (\ref{defa34})
with $ \mu^2N_{1,\max}$ replaced by~$\eps$.
Like before, we assume $a_2,a_3<0$ and choose $\sigma=-a_1/a_3>0$. Then we have $a_2+\sigma a_4<0$ provided
$a_1a_4<a_2a_3$, which is now equivalent to
$$K \chi^4A_{\min}^{-4}A^4_{\max}(A_{\max}-A_{\min})^2
\psi^2   \eps <
16\eps_1(\eta-\eps_1)\bigl(4(1-\eps_2)\eps_2-\eps_3\eps\bigr)\eps_3.
$$
Choosing $\eps_1=\eta/2$, $\eps_2=1/2$, $\eps_3=1/(2\eps)$, the condition becomes
$K \chi^4A_{\min}^{-4}A^4_{\max}(A_{\max}-A_{\min})^2\psi^2   \eps < \eta^2\eps^{-1}$
which is verified for $\eps$ suitably small. Estimates (\ref{ineqNablaA2})-(\ref{ineqDeltaA2}) then follow
as in the proof of Lemma~\ref{lembound}(ii).

The rest of the proof of the theorem then relies on Lemma~\ref{bounda} similarly as in the proof of Theorem~\ref{global}.
\hfill $ \Box $

\section{Conclusion}

In this article, we have considered a nonlinear, strongly coupled, parabolic system arising in the modeling of burglary in residential areas.
The system involves two spatio-temporal unknowns: the attractivity value of the property and the criminal density.
The system is of chemotaxis-type and involves a logarithmic sensivity function and specific interaction and relaxation terms.
This model has appeared in~\cite{Pit10}, as a modification of the model of Short et al.~\cite{SDPTBBC}. In~\cite{Pit10} some results about linearized stability/instability of the homogeneous steady-states 
and some numerical simulations suggesting the existence of hotspots were given.
However, the (local and) global existence of solutions was left open.

In this article, under suitable assumptions on the data of the problem, we have given a rigorous proof of the existence of a global 
and bounded, classical solution,
thereby solving the problem left open in \cite{Pit10}.
In the range of anti-diffusion parameter $\chi$ relevant for the criminological model, our sufficient condition for global existence roughly says that,
at the initial time, the product of the oscillation of the attractivity value and of the total criminal population should not be two large.
Our proofs are based on the construction of approximate entropies and on the use of various functional inequalities.

We have also provided explicit numerical conditions for global existence when the domain is a square,
including concrete cases involving values of the parameters
which are expected to be physically relevant.
In such cases, in the range of diffusion parameter $\eta$ used by~\cite{SDPTBBC}, our global existence conditions are compatible with magnitudes of attractivity which are up to about twice those of their static component. It is suggested in~\cite{Pit10} that a ratio of order 10 (instead of 2) might be desirable, even for smaller values of $\eta$ that the ones we consider here. However, this is still beyond the range in which we can rigorously prove global existence (and actually, global existence for such systems need not be taken for granted, as shown by the existing chemotaxis literature).

\end{document}